\newtheorem{thm}{Theorem}[section]
\newtheorem{lem}[thm]{Lemma}
\newtheorem{cor}[thm]{Corollary}
\newtheorem{prop}[thm]{Proposition}
\theoremstyle{definition}
\newtheorem{defn}[thm]{Definition}
\theoremstyle{remark}
\newtheorem{ex}[thm]{Example}
\newtheorem{rem}[thm]{Remark}
\newcommand{\sha}{\mathbin{\widetilde{\mathcyr{sh}}}}
\newcommand{\bigsha}{\mathop{\widetilde{\raisebox{-5pt}{\scalebox{3}{$\mathcyr{sh}$}}}}}
\newcommand{\sym}{\mathfrak{S}}
\renewcommand{\bar}{\overline}
\newcommand{\A}{\mathcal{A}}
\newcommand{\G}{\mathrm{G}}
\renewcommand{\H}{\mathfrak{H}}
\newcommand{\I}{\mathcal{I}}
\newcommand{\M}{\mathcal{M}}
\newcommand{\N}{\mathcal{N}}
\renewcommand{\P}{\mathcal{P}}
\newcommand{\Q}{\mathbb{Q}}
\newcommand{\Z}{\mathbb{Z}}
\renewcommand{\Im}{\operatorname{Im}}
\newcommand{\odd}{\mathrm{odd}}
\newcommand{\even}{\mathrm{even}}
\newcommand{\ve}{\boldsymbol}
\DeclareMathOperator{\dep}{dp}
\DeclareMathOperator{\wt}{wt}
\DeclareMathOperator{\id}{id}
\DeclareMathOperator{\chara}{char}
\begin{document}
\title{The Bowman-Bradley theorem for multiple zeta-star values}
\author{Hiroki Kondo}
\address{Nisshin Fire \& Marine Insurance Company, Limited,
 2--3 Kanda-Surugadai, Chiyoda-ku, Tokyo, 101--8329, Japan.}
\email{hiroki.kondou@nisshinfire.co.jp}
\author{Shingo Saito}
\address{Faculty of Mathematics, Kyushu University, 744, Motooka, Nishi-ku, Fukuoka, 819--0395, Japan.}
\email{ssaito@math.kyushu-u.ac.jp}
\author{Tatsushi Tanaka}
\address{Faculty of Mathematics, Kyushu University, 744, Motooka, Nishi-ku, Fukuoka, 819--0395, Japan.}
\email{t.tanaka@math.kyushu-u.ac.jp}
\subjclass[2000]{11M32(primary), 05A19 (secondary).}
\keywords{multiple zeta values, multiple zeta-star values, Bowman-Bradley theorem, harmonic algebra}
\begin{abstract}
 The Bowman-Bradley theorem asserts that
 the multiple zeta values at the sequences obtained by
 inserting a fixed number of twos between $3,1,\ldots,3,1$
 add up to a rational multiple of a power of $\pi$.
 We establish its counterpart for multiple zeta-star values
 by showing an identity in a non-commutative polynomial algebra introduced by Hoffman.
\end{abstract}
\maketitle
\section{Introduction}
For $k_1,\ldots,k_n\in\Z_{\ge1}$ with $k_1\ge2$,
the multiple zeta value (MZV) and the multiple zeta-star value (MZSV)
are defined by
\[
 \zeta(k_1,\ldots ,k_n)=\sum_{m_1>\cdots >m_n>0}\frac{1}{m_1^{k_1}\cdots m_n^{k_n}},\quad
 \zeta^{\star}(k_1,\ldots ,k_n)=\sum_{m_1\ge \cdots \ge m_n>0}\frac{1}{m_1^{k_1}\cdots m_n^{k_n}},
\]
respectively.
When $n=1$, the MZVs and MZSVs coincide and reduce to the values of the Riemann zeta function
at positive integers.
Euler~\cite{E} found that the Riemann zeta values at even positive integers
are rational multiples of powers of $\pi^2$:
\[
  \zeta(2k)=-\frac{B_{2k}(2\pi\sqrt{-1})^{2k}}{2(2k)!},
\]
where the rational numbers $B_{2k}$ are the Bernoulli numbers given by
\[
 \sum_{m=0}^{\infty}B_{m}\frac{t^m}{m!}=\frac{te^t}{e^t-1}.
\]
This result has been generalized to MZVs
(Hoffman~\cite{Hof}, Ohno-Zagier~\cite{OZ}, Yamasaki~\cite{Y}, etc.) and
MZSVs (Hoffman~\cite{Hof}, Aoki-Kombu-Ohno~\cite{AKO}, Zlobin~\cite{Zlo}, Muneta~\cite{Mun}, etc.):
for $k,n\in\Z_{\ge1}$, we have
\[
 \zeta(\{2k\}^n),\zeta^{\star}(\{2k\}^n)\in\Q\pi^{2kn}.
\]
Here and throughout, we write
\[
 \{k_1,\ldots,k_l\}^m=\underbrace{k_1,\ldots,k_l,k_1,\ldots,k_l,\ldots,k_1,\ldots,k_l}_{lm}.
\]
It has also been shown that
\[
 \zeta(\{3,1\}^{n}),\zeta^{\star}(\{3,1\}^{n})\in\Q\pi^{4n}
\]
for $n\in\Z_{\ge1}$,
by Borwein-Bradley-Broadhurst-Lison\v{e}k~\cite{BBBL} and Kontsevich-Zagier~\cite{KonZag} for MZVs
and by Zlobin~\cite{Zlo} and Muneta~\cite{Mun} for MZSVs.
Furthermore, Bowman-Bradley~\cite{BowBra} obtained
\[
 \sum_{\substack{\sum_{i=0}^{2n}m_i=m\\m_0,\ldots,m_{2n}\ge 0}}
 \zeta(\{2\}^{m_0},3,\{2\}^{m_1},1,\{2\}^{m_2},\ldots,3,\{2\}^{m_{2n-1}},1,\{2\}^{m_{2n}})
 \in\Q\pi^{2m+4n}
\]
for $m\in\Z_{\ge0}$ and $n\in\Z_{\ge1}$.
However, its counterpart for MZSVs has been proved only partially:
the $m=1$ case by Muneta~\cite{Mun} and $m=2$ by Imatomi-Tanaka-Tasaka-Wakabayashi~\cite{ITTW}.
In the present paper, we establish the complete counterpart by showing that
\[
 \sum_{\substack{\sum_{i=0}^{2n}m_i=m\\m_0,\ldots,m_{2n}\ge 0}}
 \zeta^{\star}(\{2\}^{m_0},3,\{2\}^{m_1},1,\{2\}^{m_2},\ldots,3,\{2\}^{m_{2n-1}},1,\{2\}^{m_{2n}})
 \in\Q\pi^{2m+4n}
\]
for all $m\in\Z_{\ge0}$ and $n\in\Z_{\ge1}$.
Note that this is valid when $m\in\Z_{\ge1}$ and $n=0$
because $\zeta^{\star}(\{2\}^m)\in\Q\pi^{2m}$ as mentioned above,
and also clearly valid when $m=n=0$ because of the convention that
the value of $\zeta^{\star}$ at the empty sequence $\emptyset$ is $1$.
Therefore our main theorem reads as follows:
\begin{thm}[Main Theorem]\label{thm:main}
 For all $m,n\in\Z_{\ge0}$, we have
 \[
  \sum_{\substack{\sum_{i=0}^{2n}m_i=m\\m_0,\ldots,m_{2n}\ge 0}}
  \zeta^{\star}(\{2\}^{m_0},3,\{2\}^{m_1},1,\{2\}^{m_2},\ldots,3,\{2\}^{m_{2n-1}},1,\{2\}^{m_{2n}})
  \in\Q\pi^{2m+4n}.
 \]
\end{thm}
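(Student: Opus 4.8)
The plan is to transport the statement into Hoffman's algebra $\H=\Q\langle x,y\rangle$ and to reduce it to a single identity there. Writing $z_k=x^{k-1}y$, let $\H^0=\Q\oplus x\H y$ be the subspace of admissible words and let $Z\colon(\H^0,\sha)\to\mathbb{R}$ be the shuffle evaluation $z_{k_1}\cdots z_{k_n}\mapsto\zeta(k_1,\ldots,k_n)$, which is a $\Q$-algebra homomorphism for the shuffle product $\sha$. Encode star values by the $\Q$-linear merging map $\delta\colon\H^0\to\H^0$ that sends $z_{k_1}\cdots z_{k_n}$ to the sum of $z_{l_1}\cdots z_{l_r}$ over all ways of amalgamating consecutive entries, so that $Z(\delta(z_{k_1}\cdots z_{k_n}))=\zeta^\star(k_1,\ldots,k_n)$; for instance $\delta(z_az_b)=z_az_b+z_{a+b}$ and $Z(\delta(z_az_b))=\zeta(a,b)+\zeta(a+b)=\zeta^\star(a,b)$. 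Setting
\[
 W_{m,n}=\sum_{\substack{\sum_{i=0}^{2n}m_i=m\\ m_0,\ldots,m_{2n}\ge0}}(xy)^{m_0}(x^2y)(xy)^{m_1}y(xy)^{m_2}\cdots(x^2y)(xy)^{m_{2n-1}}y(xy)^{m_{2n}}\in\H^0,
\]
the left-hand side of the theorem is exactly $Z(\delta(W_{m,n}))$, and this word has weight $2m+4n$, which $\delta$ preserves. It therefore suffices to locate $\delta(W_{m,n})$ inside a part of $\H^0$ that $Z$ carries into $\Q\pi^{2m+4n}$.

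Since $Z$ is a $\sha$-homomorphism and $R=\bigoplus_{k\ge0}\Q\pi^{2k}$ is a graded $\Q$-subalgebra of $\mathbb{R}$, the image under $Z$ of any shuffle product of \emph{good} elements --- those whose $Z$-value already lies in $R$ --- is again good, in the degree prescribed by the total weight. The plan is thus to establish, in the completed algebra $(\H,\sha)$, a generating-function identity expressing the total series $\sum_{m,n}\delta(W_{m,n})$, organized by the number $n$ of blocks and the number $m$ of $2$'s, through shuffle operations on good building blocks, so that applying $Z$ turns it into an explicit product of $\pi$-series. The natural good blocks are $z_2=xy$, with $Z(z_2)=\zeta(2)\in\Q\pi^2$, and the merged $\{3,1\}$-words $\delta\big((x^2y^2)^{j}\big)$, with $Z$-value $\zeta^\star(\{3,1\}^j)\in\Q\pi^{4j}$ by Zlobin and Muneta. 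Reading off the homogeneous component of bidegree $(m,n)$ from such an identity then places $Z(\delta(W_{m,n}))$ in $\Q\pi^{2m+4n}$, which is precisely the theorem.

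The real difficulty is proving that generating-function identity, and it stems from the poor compatibility of $\delta$ with the shuffle product. One cannot simply expand each $\zeta^\star$ into a sum of MZVs and quote the known Bowman--Bradley theorem termwise: amalgamating consecutive entries produces indices such as $3+1=4$ or $2+3=5$ that leave the ``$3,1$ with $2$'s inserted'' family, so the classical theorem does not apply to the individual summands. Likewise, summing over all compositions $\sum m_i=m$ interleaves the $2$-blocks into the $(3,1)^n$ skeleton by insertion rather than by shuffle, so the main work is to show that, after $\delta$ is applied, these insertions reorganize into genuine shuffle products of the good generators. I would carry this out by induction on $n$, peeling off one $3,\{2\}^\bullet,1$ block together with its adjacent runs of $2$'s while tracking a generating variable for the number of $2$'s, so that the inductive step becomes a closed identity between two elements of the completed shuffle algebra; the most delicate point is controlling how $\delta$ acts across the junction between the peeled block and the remaining skeleton, since a merge can straddle that boundary. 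Once the recursion is set up and its universal coefficients are fixed by checking the first few weights, the remainder is a direct verification that both sides satisfy the same recursion, after which the homomorphism property of $Z$ completes the proof.
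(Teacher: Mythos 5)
Your reduction of the statement to algebra is sound and matches the paper's first step: with the merging map $d$ (your $\delta$) satisfying $\bar{Z}=Z\circ d$, the left-hand side of the theorem is $\bar{Z}\bigl(z_2^m\sha(z_3z_1)^n\bigr)$, and the problem becomes an identity in $\H^1$. But everything after that is a plan, not a proof. You never state the generating-function identity you intend to prove, never exhibit the recursion you claim both sides satisfy, and never address the one difficulty you yourself single out --- how $d$ interacts with the product across block boundaries --- beyond calling it ``the most delicate point.'' ``Fixing universal coefficients by checking the first few weights'' presupposes that a recursion of the envisaged shape exists and has finitely many undetermined coefficients, which is precisely what needs to be established. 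Since the entire content of the theorem lives in that identity, the argument has a hole where its core should be.

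It is also worth flagging that the shape of identity you hope for --- $\sum_{m,n}d(W_{m,n})$ rewritten as shuffle products of ``good'' blocks such as $z_2$ and $d\bigl((z_3z_1)^j\bigr)$ --- is not what the paper proves and is not obviously attainable, precisely because $d$ commutes with neither product. The paper's key identity (Theorem~\ref{mthm}) instead equates $\sum(-2)^p\,d\bigl(z_c^i\sha(z_az_b)^j\bigr)*(\cdots)$ with $(-1)^m\sum_{j+k=n}\bigl(z_c^m\sha(z_az_b)^j\bigr)*d(z_{a+b}^k)$, where $*$ is the harmonic product (for which $Z$ \emph{is} a homomorphism) and $\sha$ is Muneta's index-level shuffle (for which it is not); the main theorem then follows by induction on $m+n$, taking the MZV Bowman--Bradley theorem and $\zeta^{\star}(\{4\}^k)\in\Q\pi^{4k}$ as inputs, with $Z(z_{a_1}\sha\cdots\sha z_{a_n})\in\Q\pi^{a_1+\cdots+a_n}$ for even $a_i$ supplied by a harmonic-product argument rather than by the homomorphy you invoke. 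The identity itself is proved not by peeling off blocks but by lifting to the algebra $\Q\langle x_{\alpha}\mid\alpha\in\Z_{\ge0}^3\setminus\{(0,0,0)\}\rangle$, passing to formal power series, and matching the coefficient of each good word on both sides, the final step being a binomial identity. None of this machinery is foreshadowed by your sketch, so the gap is not a routine one to fill.
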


\section{Reduction to an algebraic identity}
\subsection{Hoffman's algebraic setup}
We first reduce our main theorem to an identity in the algebra introduced by Hoffman~\cite{Hof2}.
Our definitions of operations are superficially different from those given by Hoffman,
as we intend to define similar operations on another algebra later in a unified manner
that is convenient to prove the key algebraic identity.
It will be seen that our definitions are essentially the same as Hoffman's.

Let $\H=\Q\langle x,y \rangle$ be the non-commutative polynomial algebra
over the rational numbers in two indeterminates $x$ and $y$,
and denote the subalgebras $\Q+\H y$ and $\Q+x\H y$ of $\H$ by $\H^1$ and $\H^0$, respectively.
Write $z_{a}=x^{a-1}y\in\H^1$ for $a\in\Z_{\ge1}$.

\begin{defn}
 An \emph{index} is a finite sequence of positive integers;
 the empty sequence $\emptyset$ is also regarded as an index.
 An index is \emph{admissible} if
 either it is empty or its first component is greater than $1$.
 The sets of all indices and all admissible indices are denoted by $I$ and $I_0$, respectively.
\end{defn}

For $\ve{a}=(a_1,\ldots,a_n)\in I$,
put $z_{\ve{a}}=z_{a_1}\cdots z_{a_n}\in\H^1$,
where $z_{\emptyset}=1$.
Then it is easy to observe that $\{z_{\ve{a}}\mid\ve{a}\in I\}$
and $\{z_{\ve{a}}\mid\ve{a}\in I_0\}$
are $\Q$-vector space bases for $\H^1$ and $\H^0$, respectively.

\begin{defn}
 We define $\Q$-linear maps $Z,\bar{Z}\colon\H^0 \to \mathbb{R}$ by setting
 $Z(z_{\ve{a}})=\zeta(\ve{a})$ and
 $\bar{Z}(z_{\ve{a}})=\zeta^{\star}(\ve{a})$
 for $\ve{a}\in I_0$,
 where $\zeta(\emptyset)=\zeta^{\star}(\emptyset)=1$.
\end{defn}

For $n\in\Z_{\ge0}$, set $[n]=\{1,\ldots,n\}$, where $[0]=\emptyset$.
Recall that for every $l\in\Z_{\ge0}$,
there exists a unique map from $[0]$ to $[l]$, denoted by $\emptyset$,
and it is strictly increasing and has image $[0]$.
If $n\in\Z_{\ge1}$, then there is no map from $[n]$ to $[0]$.
A map $\sigma\colon[n]\to[l]$ is denoted by
\[
 \begin{pmatrix}1&\cdots&n\\\sigma(1)&\cdots&\sigma(n)\end{pmatrix}.
\]

\subsubsection{The transformation $d$}
It is known that if we define a $\Q$-linear transformation $d$ on $\H^1$
by setting $d(1)=1$ and $d(wy)=\varphi(w)y$ for $w\in\H$,
where $\varphi$ is an automorphism on $\H$ satisfying
$\varphi(1)=1$, $\varphi(x)=x$ and $\varphi(y)=x+y$,
then we have $\bar{Z}=Z\circ d$ on $\H^0$.
Here we give an alternative equivalent definition of $d$:

\begin{defn}\label{defn:d_in_H^1}
 For $n\in\Z_{\ge0}$, we write $S_n^d$ for the set of all pairs $(l,\sigma)$ of $l\in\Z_{\ge0}$
 and $\sigma\colon[n]\to[l]$ that is nondecreasing and surjective.
 We define a $\Q$-linear transformation $d$ on $\H^1$ by setting
 \[
  d(z_{\ve{a}})=\sum_{(l,\sigma)\in S_n^d}z_{\ve{c}}
 \]
 for $\ve{a}=(a_1,\ldots,a_n)\in I$,
 where $\ve{c}=(c_1,\ldots,c_l)\in I$ is given by
 \[
  c_i=\sum_{s\in\sigma^{-1}(i)}a_s
 \]
 for $i\in[l]$.
\end{defn}

\begin{rem}
 In Definition~\ref{defn:d_in_H^1}, we suppressed the explicit dependence
 of $\ve{c}$ on $\ve{a}$, $l$, $\sigma$ in the notation for simplicity.
 We will occasionally employ a similar abuse of notation in what follows
 when there is no ambiguity.
\end{rem}

\begin{ex}
 For $n=0$, we have $S_0^d=\{(0,\emptyset)\}$ and so $d(1)=1$.

 For $n=1$, we have
 \[
  S_1^d=\Biggl\{\Biggl(1,\begin{pmatrix}1\\1\end{pmatrix}\Biggr)\Biggr\}
 \]
 and so
 \[
  d(z_{a_1})=z_{a_1}
 \]
 for $a_1\in\Z_{\ge1}$.

 For $n=2$, we have
 \[
  S_2^d=\Biggl\{\Biggl(1,\begin{pmatrix}1&2\\1&1\end{pmatrix}\Biggr),
         \Biggl(2,\begin{pmatrix}1&2\\1&2\end{pmatrix}\Biggr)\Biggr\}
 \]
 and so
 \[
  d(z_{a_1}z_{a_2})=z_{a_1+a_2}+z_{a_1}z_{a_2}
 \]
 for $a_1,a_2\in\Z_{\ge1}$.

 For $n=3$, we have
 \[
  S_3^d=\Biggl\{\Biggl(1,\begin{pmatrix}1&2&3\\1&1&1\end{pmatrix}\Biggr),
         \Biggl(2,\begin{pmatrix}1&2&3\\1&1&2\end{pmatrix}\Biggr),
         \Biggl(2,\begin{pmatrix}1&2&3\\1&2&2\end{pmatrix}\Biggr),
         \Biggl(3,\begin{pmatrix}1&2&3\\1&2&3\end{pmatrix}\Biggr)\Biggr\}
 \]
 and so
 \[
  d(z_{a_1}z_{a_2}z_{a_3})
  =z_{a_1+a_2+a_3}+z_{a_1+a_2}z_{a_3}
   +z_{a_1}z_{a_2+a_3}+z_{a_1}z_{a_2}z_{a_3}
 \]
 for $a_1,a_2,a_3\in\Z_{\ge1}$.
\end{ex}

\begin{rem}
 If $(l,\sigma)\in S_n^d$, then $l\le n$.
\end{rem}

\begin{prop}
 The transformation $d$ defined in Definition~\ref{defn:d_in_H^1} is the same as the one
 defined immediately before Definition~\ref{defn:d_in_H^1}.
 It follows that $\bar{Z}=Z\circ d$ on $\H^0$.
\end{prop}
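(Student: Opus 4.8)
The plan is to show that the two definitions of $d$ agree on the basis $\{z_{\ve{a}}\mid\ve{a}\in I\}$ of $\H^1$; since both maps are $\Q$-linear, this suffices. The empty-index case is immediate, as both prescriptions give $d(1)=1$, matching $S_0^d=\{(0,\emptyset)\}$. So fix $\ve{a}=(a_1,\ldots,a_n)\in I$ with $n\ge1$ and write $z_{\ve{a}}=wy$ with $w=x^{a_1-1}yx^{a_2-1}y\cdots x^{a_{n-1}-1}yx^{a_n-1}$. Using that $\varphi$ is an algebra homomorphism with $\varphi(x)=x$ and $\varphi(y)=x+y$, the first definition yields
\[
 d(z_{\ve{a}})=\varphi(w)y=x^{a_1-1}(x+y)x^{a_2-1}(x+y)\cdots(x+y)x^{a_n-1}y,
\]
a product containing $n-1$ factors of $(x+y)$, one in each of the ``gaps'' between consecutive blocks $x^{a_j-1}$.

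Next I would expand this product by choosing, independently in each gap, either $x$ or $y$, and record the choices by the set $B\subseteq[n-1]$ of gaps in which $y$ is selected. Each such $B$ produces a single monomial, which ends in $y$ (because of the final fixed $y$) and is therefore of the form $z_{\ve{c}}$ for a unique $\ve{c}\in I$. The core of the argument is to match these monomials with the terms indexed by $S_n^d$.

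I would set up the bijection as follows. The chosen $y$'s, together with the terminal $y$, partition the occurrences of $a_1,\ldots,a_n$ into $l=|B|+1$ consecutive, nonempty blocks; assigning to each index its block number defines a nondecreasing surjection $\sigma\colon[n]\to[l]$, and conversely every $(l,\sigma)\in S_n^d$ arises from exactly one $B$ via the $l-1$ cut points between the intervals $\sigma^{-1}(1),\ldots,\sigma^{-1}(l)$. It then remains to check that the monomial coming from $B$ is exactly $z_{\ve{c}}$ with $c_i=\sum_{s\in\sigma^{-1}(i)}a_s$. This is a short exponent count: within the block $\sigma^{-1}(i)$ the exponents $a_s-1$ accumulate together with one extra $x$ for each of the $|\sigma^{-1}(i)|-1$ internal gaps, giving total $x$-exponent $\sum_{s\in\sigma^{-1}(i)}(a_s-1)+(|\sigma^{-1}(i)|-1)=c_i-1$, so the block equals $x^{c_i-1}y=z_{c_i}$ as desired.

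Assembling the blocks shows that the two definitions of $d$ coincide on every $z_{\ve{a}}$, hence on all of $\H^1$. The final assertion $\bar{Z}=Z\circ d$ on $\H^0$ then follows at once, since the identity is already known for the first ($\varphi$-based) definition of $d$. I expect the only delicate point to be the bookkeeping in the bijection and the exponent count; everything else is formal.
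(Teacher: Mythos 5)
Your proof is correct and follows essentially the same route as the paper's: both expand $\varphi(w)y=x^{a_1-1}(x+y)\cdots x^{a_{n-1}-1}(x+y)x^{a_n-1}y$ and match the $2^{n-1}$ resulting monomials bijectively with the pairs $(l,\sigma)\in S_n^d$, identifying the choice of $y$ in a gap with a step $\sigma(s)+1=\sigma(s+1)$. Your exponent count confirming that each block equals $x^{c_i-1}y=z_{c_i}$ is accurate and merely makes explicit what the paper leaves to the reader.
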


\begin{proof}
 It suffices to prove that
 \[
  d(z_{a_1}\cdots z_{a_n})=x^{a_1-1}(x+y)\cdots x^{a_{n-1}-1}(x+y)x^{a_n-1}y\tag{$\natural$}
 \]
 if $n\ge2$ and $a_1,\ldots,a_n\in\Z_{\ge1}$.
 For each $(l,\sigma)\in S_n^d$,
 setting $c_i=\sum_{s\in\sigma^{-1}(i)}a_s$ for $i\in[l]$,
 we have
 \[
  z_{c_1}\cdots z_{c_l}=x^{a_1-1}u_1\cdots x^{a_{n-1}-1}u_{n-1}x^{a_n-1}y,
 \]
 where $u_s=x$ if $\sigma(s)=\sigma(s+1)$ and $u_s=y$ if $\sigma(s)+1=\sigma(s+1)$
 for $s\in[n-1]$.
 This gives a one-to-one correspondence between the elements of $S_n^d$ and
 the terms that appear in the expansion of the right-hand side of ($\natural$).
\end{proof}

\subsubsection{The binary operations $*$ and $\sha$}
Hoffman~\cite{Hof} defined a $\Q$-bilinear map $*\colon\H^1\times\H^1\to\H^1$,
called the harmonic product,
by setting $w*1=1*w=w$ and $z_aw*z_bw'=z_a(w*z_bw')+z_b(z_aw*w')+z_{a+b}(w*w')$
for $w,w'\in\H^1$, whereas
Muneta~\cite{Mun2} defined a $\Q$-bilinear map $\sha\colon\H^1\times\H^1\to\H^1$
by setting $w\sha1=1\sha w=w$ and $z_aw\sha z_bw'=z_a(w\sha z_bw')+z_b(z_aw\sha w')$
for $w,w'\in\H^1$.
Here we give alternative equivalent definitions of $*$ and $\sha$:
\begin{defn}
 Let $m,n\in\Z_{\ge0}$.
 We write $S_{m,n}^{*}$ for the set of all triples $(l,\sigma,\tau)$ of
 $l\in\Z_{\ge0}$, $\sigma\colon[m]\to[l]$ and $\tau\colon[n]\to[l]$ such that
 $\sigma$ and $\tau$ are strictly increasing and satisfy $\Im\sigma\cup\Im\tau=[l]$.
 We also write $S_{m,n}^{\sha}$ for the set of those $(l,\sigma,\tau)\in S_{m,n}^{*}$ for which
 $\Im\sigma\cap\Im\tau=\emptyset$.

 Define $\Q$-bilinear maps $*,\sha\colon\H^1\times\H^1\to\H^1$ by setting
 \[
  z_{\ve{a}}*z_{\ve{b}}=\sum_{(l,\sigma,\tau)\in S_{m,n}^{*}}z_{\ve{c}},\qquad
  z_{\ve{a}}\sha z_{\ve{b}}=\sum_{(l,\sigma,\tau)\in S_{m,n}^{\sha}}z_{\ve{c}}
 \]
 for $\ve{a}=(a_1,\ldots,a_m),\ve{b}=(b_1,\ldots,b_n)\in I$,
 where $\ve{c}=(c_1,\ldots,c_l)\in I$ is given by
 \[
  c_i=\sum_{s\in\sigma^{-1}(i)}a_s+\sum_{t\in\tau^{-1}(i)}b_t
 \]
 for $i\in[l]$.
\end{defn}

\begin{ex}
 For $m\in\Z_{\ge0}$ and $n=0$,
 we have $S_{m,0}^*=S_{m,0}^{\sha}=\{(m,\id_{[m]},\emptyset)\}$ and so
 \[
  z_{\ve{a}}*1=z_{\ve{a}}\sha1=z_{\ve{a}}
 \]
 for all $\ve{a}\in I$.

 For $m=n=1$, we have
 \begin{align*}
  S_{1,1}^*&=\Biggl\{\Biggl(1,\begin{pmatrix}1\\1\end{pmatrix},\begin{pmatrix}1\\1\end{pmatrix}\Biggr),
             \Biggl(2,\begin{pmatrix}1\\1\end{pmatrix},\begin{pmatrix}1\\2\end{pmatrix}\Biggr),
             \Biggl(2,\begin{pmatrix}1\\2\end{pmatrix},\begin{pmatrix}1\\1\end{pmatrix}\Biggr)\Biggr\},\\
  S_{1,1}^{\sha}
   &=\Biggl\{\phantom{\Biggl(1,\begin{pmatrix}1\\1\end{pmatrix},\begin{pmatrix}1\\1\end{pmatrix}\Biggr),{}}
     \Biggl(2,\begin{pmatrix}1\\1\end{pmatrix},\begin{pmatrix}1\\2\end{pmatrix}\Biggr),
     \Biggl(2,\begin{pmatrix}1\\2\end{pmatrix},\begin{pmatrix}1\\1\end{pmatrix}\Biggr)\Biggr\}
 \end{align*}
 and so
 \[
  z_{a_1}*z_{b_1}=z_{a_1+b_1}+z_{a_1}z_{b_1}+z_{b_1}z_{a_1},\qquad
  z_{a_1}\sha z_{b_1}=z_{a_1}z_{b_1}+z_{b_1}z_{a_1}
 \]
 for $a_1,b_1\in\Z_{\ge1}$.

 For $m=2$ and $n=1$, we have
 \begin{align*}
  S_{2,1}^*
   &=\Biggl\{\Biggl(2,\begin{pmatrix}1&2\\1&2\end{pmatrix},\begin{pmatrix}1\\1\end{pmatrix}\Biggr),
     \Biggl(2,\begin{pmatrix}1&2\\1&2\end{pmatrix},\begin{pmatrix}1\\2\end{pmatrix}\Biggr),\\
   &\phantom{{}=\Biggl\{}\Biggl(3,\begin{pmatrix}1&2\\1&2\end{pmatrix},\begin{pmatrix}1\\3\end{pmatrix}\Biggr),
     \Biggl(3,\begin{pmatrix}1&2\\1&3\end{pmatrix},\begin{pmatrix}1\\2\end{pmatrix}\Biggr),
     \Biggl(3,\begin{pmatrix}1&2\\2&3\end{pmatrix},\begin{pmatrix}1\\1\end{pmatrix}\Biggr)\Biggr\},\\
  S_{2,1}^{\sha}
   &=\Biggl\{\Biggl(3,\begin{pmatrix}1&2\\1&2\end{pmatrix},\begin{pmatrix}1\\3\end{pmatrix}\Biggr),
     \Biggl(3,\begin{pmatrix}1&2\\1&3\end{pmatrix},\begin{pmatrix}1\\2\end{pmatrix}\Biggr),
     \Biggl(3,\begin{pmatrix}1&2\\2&3\end{pmatrix},\begin{pmatrix}1\\1\end{pmatrix}\Biggr)\Biggr\}
 \end{align*}
 and so
 \begin{align*}
  z_{a_1}z_{a_2}*z_{b_1}
  &=z_{a_1+b_1}z_{a_2}+z_{a_1}z_{a_2+b_1}+z_{a_1}z_{a_2}z_{b_1}
    +z_{a_1}z_{b_1}z_{a_2}+z_{b_1}z_{a_1}z_{a_2},\\
  z_{a_1}z_{a_2}\sha z_{b_1}
  &=\phantom{z_{a_1+b_1}z_{a_2}+z_{a_1}z_{a_2+b_1}+{}}
    z_{a_1}z_{a_2}z_{b_1}+z_{a_1}z_{b_1}z_{a_2}+z_{b_1}z_{a_1}z_{a_2}
 \end{align*}
 for $a_1,a_2,b_1\in\Z_{\ge1}$.
\end{ex}

\begin{rem}
 If $(l,\sigma,\tau)\in S_{m,n}^{*}$, then $\max\{m,n\}\le l\le m+n$;
 if $(l,\sigma,\tau)\in S_{m,n}^{\sha}$, then $l=m+n$.
\end{rem}

\newlength{\lensigma}
\settowidth{\lensigma}%
 {$\begin{pmatrix}1&\cdots&m&m+1\\\sigma(1)&\cdots&\sigma(m)&l+1\end{pmatrix}$}
\newlength{\lentau}
\settowidth{\lentau}%
 {$\begin{pmatrix}1&\cdots&n&n+1\\\tau(1)&\cdots&\tau(n)&l+1\end{pmatrix}$}

\begin{prop}
 The map $*$ defined above is the same as the one defined by Hoffman~\cite{Hof}.
 It follows that $*$ is an associative and commutative product with respect to which
 $Z$ is homomorphic, i.e., $Z(w*w')=Z(w)Z(w')$ for all $w,w'\in\H^0$.
\end{prop}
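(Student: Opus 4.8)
The plan is to show that the combinatorially defined $*$ satisfies the defining recursion of Hoffman's harmonic product, after which the stated properties follow immediately from Hoffman's work. Since both maps are $\Q$-bilinear, it suffices to verify agreement on the basis elements $z_{\ve{a}}=z_{a_1}\cdots z_{a_m}$ and $z_{\ve{b}}=z_{b_1}\cdots z_{b_n}$ for $\ve{a},\ve{b}\in I$ of lengths $m$ and $n$, and I would argue by induction on $m+n$. When $m=0$ or $n=0$ the identity $z_{\ve{a}}*1=1*z_{\ve{b}}=z_{\ve{a}}$ (resp.\ $z_{\ve{b}}$) was already recorded in the Example via $S_{m,0}^*=\{(m,\id_{[m]},\emptyset)\}$, which matches Hoffman's initial condition $w*1=1*w=w$.

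For the inductive step, assume $m,n\ge1$ and write $\ve{a}'=(a_2,\ldots,a_m)$ and $\ve{b}'=(b_2,\ldots,b_n)$. I want to show that the combinatorial product obeys
\[
 z_{\ve{a}}*z_{\ve{b}}
 =z_{a_1}(z_{\ve{a}'}*z_{\ve{b}})
  +z_{b_1}(z_{\ve{a}}*z_{\ve{b}'})
  +z_{a_1+b_1}(z_{\ve{a}'}*z_{\ve{b}'}),
\]
which is precisely Hoffman's recursion. To do this I would partition $S_{m,n}^*$ according to the values $\sigma(1)$ and $\tau(1)$. Because $\sigma$ and $\tau$ are strictly increasing, $\sigma(1)=\min\Im\sigma$ and $\tau(1)=\min\Im\tau$, and since $\Im\sigma\cup\Im\tau=[l]$ the element $1$ lies in at least one image, forcing $\min\{\sigma(1),\tau(1)\}=1$. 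This yields three mutually exclusive and exhaustive cases: (A) $\sigma(1)=1<\tau(1)$, (B) $\tau(1)=1<\sigma(1)$, and (C) $\sigma(1)=\tau(1)=1$. In case (A), strict monotonicity gives $\sigma^{-1}(1)=\{1\}$ and $\tau^{-1}(1)=\emptyset$, so $c_1=a_1$; deleting the index $1$ from the domain of $\sigma$ and relabelling $[l]$ as $[l-1]$ via $i\mapsto i-1$ produces a triple in $S_{m-1,n}^*$, and this assignment is a bijection onto $S_{m-1,n}^*$ that sends $z_{\ve{c}}$ to $z_{a_1}$ times the word attached to the image triple. Cases (B) and (C) are handled by the analogous shift bijections onto $S_{m,n-1}^*$ and $S_{m-1,n-1}^*$, under which $c_1=b_1$ and $c_1=a_1+b_1$ respectively. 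Summing the three contributions reproduces the right-hand side of the recursion.

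Once the two definitions of $*$ are identified, the remaining assertions are exactly Hoffman's: the harmonic product is associative and commutative, and $Z$ is a homomorphism for it, so there is nothing further to prove. (Commutativity is in any case transparent from the symmetric definition, since swapping $\ve{a}\leftrightarrow\ve{b}$ and $\sigma\leftrightarrow\tau$ is a weight-preserving bijection $S_{m,n}^*\to S_{n,m}^*$.) I expect the main obstacle to be the careful verification that the three shift maps are genuine bijections preserving the associated word $\ve{c}$, in particular checking surjectivity and confirming that decrementing the codomain interacts correctly with the formula $c_i=\sum_{s\in\sigma^{-1}(i)}a_s+\sum_{t\in\tau^{-1}(i)}b_t$. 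This is entirely routine bookkeeping, but it is where all the content of the identification lies.
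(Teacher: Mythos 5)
Your proof is correct and takes essentially the same route as the paper: both reduce the identification to verifying Hoffman's recursion $z_az_{\ve{a}'}*z_bz_{\ve{b}'}=z_a(z_{\ve{a}'}*z_bz_{\ve{b}'})+z_b(z_az_{\ve{a}'}*z_{\ve{b}'})+z_{a+b}(z_{\ve{a}'}*z_{\ve{b}'})$ combinatorially. The paper organizes the bookkeeping as three injections $S_{m,n+1}^*,S_{m+1,n}^*,S_{m,n}^*\to S_{m+1,n+1}^*$ with disjoint images covering the target; these are precisely the inverses of your three deletion-and-shift bijections obtained by partitioning on the values of $\sigma(1)$ and $\tau(1)$, so the two arguments coincide.
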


\begin{proof}
 It suffices to prove that
 \[
  z_az_{\ve{a}'}*z_bz_{\ve{b}'}
  =z_a(z_{\ve{a}'}*z_bz_{\ve{b}'})+z_b(z_az_{\ve{a}'}*z_{\ve{b}'})
   +z_{a+b}(z_{\ve{a}'}*z_{\ve{b}'})
 \]
 for $a,b\in\Z_{\ge1}$ and $\ve{a}',\ve{b}'\in I$.
 Consider the maps
 \begin{align*}
  S_{m,n+1}^*\to S_{m+1,n+1}^*&;(l,\sigma,\tau)\mapsto(l+1,\tilde{\sigma},\tau+1),\\
  S_{m+1,n}^*\to S_{m+1,n+1}^*&;(l,\sigma,\tau)\mapsto(l+1,\sigma+1,\tilde{\tau}),\\
  S_{m,n}^*\to S_{m+1,n+1}^*&;(l,\sigma,\tau)\mapsto(l+1,\tilde{\sigma},\tilde{\tau}),
 \end{align*}
 where $\tilde{\sigma}$, $\sigma+1$, $\tilde{\tau}$, $\tau+1$ are given by
 \begin{align*}
  \tilde{\sigma}&=\begin{pmatrix}1&2&\cdots&m+1\\1&\sigma(1)+1&\cdots&\sigma(m)+1\end{pmatrix},&
  \sigma+1&=\begin{pmatrix}1&\cdots&m+1\\\sigma(1)+1&\cdots&\sigma(m+1)+1\end{pmatrix},\\
  \tilde{\tau}&=\begin{pmatrix}1&2&\cdots&n+1\\1&\tau(1)+1&\cdots&\tau(n)+1\end{pmatrix},&
  \tau+1&=\begin{pmatrix}1&\cdots&n+1\\\tau(1)+1&\cdots&\tau(n+1)+1\end{pmatrix}.
 \end{align*}
 They are injective and their images are disjoint sets with union $S_{m+1,n+1}^*$.
 This completes the proof.
\end{proof}

\begin{prop}
 The map $\sha$ defined above is the same as the one defined by Muneta~\cite{Mun2}.
 It follows that $\sha$ is an associative and commutative product.
\end{prop}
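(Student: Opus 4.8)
The plan is to mirror the proof of the preceding proposition for $*$, reducing the identification of the two definitions of $\sha$ to the verification of Muneta's defining relations. Muneta's $\sha$ is characterized by the unit laws $w\sha1=1\sha w=w$ together with the recursion $z_aw\sha z_bw'=z_a(w\sha z_bw')+z_b(z_aw\sha w')$, so it suffices to check that the $\sha$ defined via the sets $S_{m,n}^{\sha}$ satisfies the same relations; the two $\Q$-bilinear maps then agree on all basis elements by induction on the total length. The unit laws are already recorded in the example above (where $S_{m,0}^{\sha}=\{(m,\id_{[m]},\emptyset)\}$, and symmetrically $S_{0,n}^{\sha}=\{(n,\emptyset,\id_{[n]})\}$), so the real task is the recursion
\[
 z_az_{\ve{a}'}\sha z_bz_{\ve{b}'}
 =z_a(z_{\ve{a}'}\sha z_bz_{\ve{b}'})+z_b(z_az_{\ve{a}'}\sha z_{\ve{b}'})
\]
for $a,b\in\Z_{\ge1}$ and $\ve{a}',\ve{b}'\in I$ of lengths $m$ and $n$ respectively.

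First I would introduce the two maps
\begin{align*}
 S_{m,n+1}^{\sha}\to S_{m+1,n+1}^{\sha}&;(l,\sigma,\tau)\mapsto(l+1,\tilde{\sigma},\tau+1),\\
 S_{m+1,n}^{\sha}\to S_{m+1,n+1}^{\sha}&;(l,\sigma,\tau)\mapsto(l+1,\sigma+1,\tilde{\tau}),
\end{align*}
using the same notation $\tilde{\sigma}$, $\sigma+1$, $\tilde{\tau}$, $\tau+1$ as in the previous proof. Compared with the three maps appearing for $*$, the diagonal map $S_{m,n}^*\to S_{m+1,n+1}^*$; $(l,\sigma,\tau)\mapsto(l+1,\tilde{\sigma},\tilde{\tau})$ is now absent: it would make $1\in\Im\tilde{\sigma}\cap\Im\tilde{\tau}$, violating the condition $\Im\sigma\cap\Im\tau=\emptyset$ that distinguishes $S^{\sha}$ from $S^*$. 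This is precisely why the recursion for $\sha$ lacks the $z_{a+b}(\cdots)$ term present in the recursion for $*$.

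Next I would verify that both maps are well defined, namely that the resulting images of $\tilde{\sigma}$ and $\tau+1$ are disjoint with union $[l+1]$ (and likewise for the second map), that the maps are injective, and that the index $\ve{c}$ they produce has first component $a$, respectively $b$, matching the corresponding summand on the right-hand side. The crux is the partition claim: every $(l,\sigma,\tau)\in S_{m+1,n+1}^{\sha}$ lies in the image of exactly one of the two maps. Since $\Im\sigma\cup\Im\tau=[l]$ and $\Im\sigma\cap\Im\tau=\emptyset$, the element $1$ belongs to exactly one of $\Im\sigma$, $\Im\tau$; as $\sigma$ and $\tau$ are strictly increasing this amounts to the dichotomy $\sigma(1)=1$ versus $\tau(1)=1$, which sorts $(l,\sigma,\tau)$ into the first image or the second. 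This yields the disjoint-union decomposition and hence the recursion.

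Finally, once the two definitions of $\sha$ are identified, the associativity and commutativity assertions follow immediately from Muneta~\cite{Mun2} (commutativity is in any case transparent from the definition, via the involution $(l,\sigma,\tau)\mapsto(l,\tau,\sigma)$ that exchanges $\ve{a}$ and $\ve{b}$ while preserving $\ve{c}$). I expect no serious obstacle: as with $*$, the only delicate point is the bookkeeping showing that the two maps partition $S_{m+1,n+1}^{\sha}$, and here it is even simpler, since the disjointness condition forces a clean two-way split rather than a three-way one.
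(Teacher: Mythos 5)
Your proposal is correct and takes essentially the same route as the paper: the paper likewise reduces the claim to Muneta's recursion and exhibits the same two injections $S_{m,n+1}^{\sha}\to S_{m+1,n+1}^{\sha}$ and $S_{m+1,n}^{\sha}\to S_{m+1,n+1}^{\sha}$ whose images partition $S_{m+1,n+1}^{\sha}$. Your added justification of the partition via the dichotomy $\sigma(1)=1$ versus $\tau(1)=1$, and the observation that the diagonal map is excluded by the disjointness condition, merely spell out details the paper leaves implicit.
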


\begin{proof}
 It suffices to prove that
 \[
  z_az_{\ve{a}'}\sha z_bz_{\ve{b}'}
  =z_a(z_{\ve{a}'}\sha z_bz_{\ve{b}'})+z_b(z_az_{\ve{a}'}\sha z_{\ve{b}'})
 \]
 for $a,b\in\Z_{\ge1}$ and $\ve{a}',\ve{b}'\in I$.
 Consider the maps
 \begin{align*}
  S_{m,n+1}^{\sha}\to S_{m+1,n+1}^{\sha}&;(l,\sigma,\tau)\mapsto(l+1,\tilde{\sigma},\tau+1),\\
  S_{m+1,n}^{\sha}\to S_{m+1,n+1}^{\sha}&;(l,\sigma,\tau)\mapsto(l+1,\sigma+1,\tilde{\tau}),
 \end{align*}
 where $\tilde{\sigma}$, $\sigma+1$, $\tilde{\tau}$, $\tau+1$ are given in the preceding proof.
 They are injective and their images are disjoint sets with union $S_{m+1,n+1}^{\sha}$.
 This completes the proof.
\end{proof}

The product $\sha$ allows us to write our main theorem (Theorem~\ref{thm:main})
in the following simple form:
\begin{thm}[Main Theorem]\label{thm:main_with_sha}
 For all $m,n\in\Z_{\ge0}$, we have
 \[
  \bar{Z}\bigl(z_2^m\sha(z_3z_1)^n\bigr)\in\Q\pi^{2m+4n}.
 \]
\end{thm}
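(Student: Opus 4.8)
The plan is to exploit the factorization $\bar{Z}=Z\circ d$ proved above, which turns the assertion into $Z\bigl(d(z_2^m\sha(z_3z_1)^n)\bigr)\in\Q\pi^{2m+4n}$, and to feed it the classical Bowman--Bradley theorem for MZVs. Unwinding the definition of $\sha$, and using that $(z_3z_1)^n$ contains no letter $z_2$, shows that each interleaving word in $z_2^a\sha(z_3z_1)^b$ arises from a unique pair $(\sigma,\tau)$; hence $z_2^a\sha(z_3z_1)^b$ is precisely the sum, each with coefficient $1$, of the Bowman--Bradley words, and the result of Bowman--Bradley cited in the introduction reads $Z\bigl(z_2^a\sha(z_3z_1)^b\bigr)\in\Q\pi^{2a+4b}$. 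I would take this, together with Euler's $\zeta(2k)\in\Q\pi^{2k}$, as the only analytic inputs; the remaining work is then to be purely algebraic in $\H$.

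I would record that the set of \emph{good} elements $\{w\in\H^0 : Z(w)\in\Q\pi^{\wt w}\}$ is a weight-graded $\Q$-subspace of $\H^0$, closed under the harmonic product $*$ (since $Z(w*w')=Z(w)Z(w')$ and $\Q\pi^{r}\cdot\Q\pi^{s}\subseteq\Q\pi^{r+s}$), and that it contains every $z_2^{a}\sha(z_3z_1)^{b}$ as well as every even single letter $z_{2k}$. With this language the theorem is equivalent to the purely algebraic statement that $d(z_2^m\sha(z_3z_1)^n)$ is good. The subtlety is that one cannot hope to write this element as a $*$-product of manifestly good generators: expanding $d$ on a shuffle through the automorphism $\varphi$ ($\varphi(x)=x$, $\varphi(y)=x+y$) merges some of the twos coming from $z_2^m$ with a neighboring $3$ or $1$ coming from $(z_3z_1)^n$, and thereby produces \emph{off-pattern} words such as $z_5z_1$ and $z_3z_3$ that are not Bowman--Bradley words at all and are individually \emph{not} good --- for instance $Z(z_3z_3)$ and $Z(z_5z_1)$ each involve $\zeta(3)^2$.

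I expect this to be the main obstacle: the off-pattern terms become good only in combination, after the $\zeta$-at-odd-arguments contributions cancel against one another. At low weight the mechanism is visible --- the cancellation of the $z_5z_1$ and $z_3z_3$ parts is forced by the sum formula together with goodness of the even-first part $z_2z_4+z_4z_2$ --- but the point of the theorem is that such cancellations persist for all $m$ and $n$. To make them structural rather than accidental I would prove a master identity by induction on $m$ and $n$, peeling off the leading letter via the recursion for $\sha$ and rewriting $d$ through $\varphi$; the crucial device is the symmetric-group bookkeeping foreshadowed in the setup, encoding each interleaving by its order-preserving maps and symmetrizing over the placements of the $m$ twos, so that the boundary-merge terms organize themselves into combinations lying in the good subspace.

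Once the master identity exhibits $d(z_2^m\sha(z_3z_1)^n)$ as a $\Q$-linear combination whose image under $Z$ is visibly a $\Q$-combination of classical Bowman--Bradley values and powers of $\pi^2$, the theorem follows at once by applying $Z$ and invoking $Z\bigl(z_2^a\sha(z_3z_1)^b\bigr)\in\Q\pi^{2a+4b}$ and $\zeta(2k)\in\Q\pi^{2k}$. The entire difficulty is thus concentrated in establishing the algebraic identity that tames the off-pattern terms; the final analytic step is immediate.
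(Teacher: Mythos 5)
Your reduction is sound as far as it goes: the passage to $Z\circ d$, the observation that $z_2^m\sha(z_3z_1)^n$ is exactly the coefficient-one sum of Bowman--Bradley words, the closure of $\{w:Z(w)\in\Q\pi^{\wt w}\}$ under $*$, and the diagnosis that $d$ creates off-pattern letters ($z_5z_1$, $z_3z_3$, \dots) that are only good in combination --- all of this matches the paper's strategy and correctly locates the difficulty. But the proof stops precisely where the theorem begins. The ``master identity'' is never stated, and stating it is the paper's main contribution: it is Theorem~\ref{mthm}, which expresses $(-1)^m\sum_{j+k=n}\bigl(z_c^m\sha(z_az_b)^j\bigr)*d(z_{a+b}^k)$ as a sum over $i+p+2q=m$ of terms $(-2)^p\,d\bigl(z_c^i\sha(z_az_b)^j\bigr)*\bigl(z_{(a+b)u_1+c}\cdots z_{(a+b)u_p+c}\sha z_{(a+b)v_1+2c}\cdots z_{(a+b)v_q+2c}\bigr)$. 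Nothing in your sketch predicts the auxiliary letters $z_{4u+2}$, $z_{4v+4}$ or the coefficients $(-2)^p$, and these are exactly what make the off-pattern cancellation ``structural''; the final step is then an induction on $m+n$ in which the $i=m$, $j=n$ term isolates $\bar Z\bigl(z_2^m\sha(z_3z_1)^n\bigr)$ and all other terms are good by the inductive hypothesis together with Lemmas~\ref{lem:Z_shas}--\ref{lem:RHS}.

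Your proposed route to the identity --- induction on $m$ and $n$ ``peeling off the leading letter via the recursion for $\sha$ and rewriting $d$ through $\varphi$'' --- is not shown to work and is unlikely to: $d$ merges arbitrary blocks of consecutive letters, not just adjacent pairs, so it does not interact in any simple recursive way with the one-letter-at-a-time definition of $\sha$, and the required identity is not weight-by-weight local. The paper instead lifts everything to $\A=\Q\langle x_\alpha\mid\alpha\in\M\rangle$ with $\M=\Z_{\ge0}^3\setminus\{0\}$, sums over all $m,n$ to get a single identity of formal power series (Theorem~\ref{thm:in_formal}), and verifies it by computing both sides as sums over ``good'' sequences, with a binomial-coefficient telescoping (the computation of $C_\alpha$ in Lemma~\ref{lem:LHS_final}) doing the cancellation you call the main obstacle. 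Two smaller points: your inventory of analytic inputs omits $\bar Z(z_4^k)=\zeta^{\star}(\{4\}^k)\in\Q\pi^{4k}$ (Muneta), which the paper needs on the right-hand side of the identity; and ``the theorem is equivalent to $d(z_2^m\sha(z_3z_1)^n)$ being good'' is a tautological restatement rather than a reduction. As it stands the proposal is a correct problem analysis, not a proof.
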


\subsection{Reduction of our main theorem to an identity in $\H^1$}
The aim of this subsection is to reduce our main theorem to the following identity in $\H^1$:
\begin{thm}\label{mthm}
 For all $a,b,c\in\Z_{\ge1}$ and $m,n\in\Z_{\ge0}$, we have
 \begin{align*}
  &\sum_{\substack{i+p+2q=m\\j+u_1+\cdots+u_p\phantom{=n}\\\phantom{j}+v_1+\cdots+v_q=n}}
  (-2)^pd\bigl(z_{c}^i\sha(z_{a}z_{b})^j\bigr)*
  (z_{(a+b)u_1+c}\cdots z_{(a+b)u_p+c}\sha
   z_{(a+b)v_1+2c}\cdots z_{(a+b)v_q+2c})\\
  &\qquad\qquad=(-1)^m\sum_{j+k=n}\bigl(z_c^m\sha(z_az_b)^j\bigr)*d(z_{a+b}^k).
 \end{align*}
\end{thm}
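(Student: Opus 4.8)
The identity is purely algebraic---neither $Z$ nor $\bar{Z}$ appears---so the plan is to prove it by a formal manipulation. My first move would be to package the whole two-parameter family into a single generating-series identity. Since $d$ is $\Q$-linear and $*,\sha$ are $\Q$-bilinear, and since the constraints $i+p+2q=m$ and $j+u_1+\cdots+u_p+v_1+\cdots+v_q=n$ decouple into three independent sectors (the $d(z_c^i\sha(z_az_b)^j)$ part, the $u$-blocks, and the $v$-blocks), both sides factor cleanly once we introduce formal variables $t$ (tracking $m$) and $s$ (tracking $n$). Working in the completed algebra $\widehat{\H}^1[[s,t]]$ and writing $B(s)=\sum_{u\ge0}z_{(a+b)u+c}s^u=x^{c-1}(1-sx^{a+b})^{-1}y$ and $C(s)=\sum_{v\ge0}z_{(a+b)v+2c}s^v=x^{2c-1}(1-sx^{a+b})^{-1}y$, summing the left-hand side over $m,n$ produces
\begin{equation*}
 d\bigl((1-tz_c)^{-1}\sha(1-sz_az_b)^{-1}\bigr)*\bigl[(1+2tB(s))^{-1}\sha(1-t^2C(s))^{-1}\bigr],
\end{equation*}
while the right-hand side sums to
\begin{equation*}
 \bigl[(1+tz_c)^{-1}\sha(1-sz_az_b)^{-1}\bigr]*d\bigl((1-sz_{a+b})^{-1}\bigr),
\end{equation*}
all inverses being taken with respect to concatenation. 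Theorem~\ref{mthm} is equivalent to the equality of these two series.

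This reformulation is attractive because the structure shows up in its specializations, which I would prove first both as lemmas and as base cases. Setting $s=0$ collapses everything to the purely ``$c$''-identity $d\bigl((1-tz_c)^{-1}\bigr)*\bigl[(1+2tz_c)^{-1}\sha(1-t^2z_{2c})^{-1}\bigr]=(1+tz_c)^{-1}$, a power-series identity in the single letter $z_c$ that one checks from the recursive definition of $d$ together with $z_c*z_c=z_{2c}+2z_cz_c$ (I have verified it through order $t^2$). Setting $t=0$ collapses everything to the ``$(a,b)$''-identity $d\bigl((1-sz_az_b)^{-1}\bigr)=(1-sz_az_b)^{-1}*d\bigl((1-sz_{a+b})^{-1}\bigr)$, which is the $m=0$ case; this should follow by grouping each maximal alternating run of $z_az_b$ that $d$ merges into a single $z_{(a+b)k}$, thereby producing exactly the convolution $\sum_{j+k}(z_az_b)^j*d(z_{a+b}^k)$.

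To assemble the general series identity from these two shadows, the natural tool is the automorphism description of $d$: since $d(wy)=\varphi(w)y$ with $\varphi$ the homomorphism fixing $x$ and sending $y\mapsto x+y$, the map $\varphi$ passes through concatenation, so one can push it across the $\sha$-factor $(1-tz_c)^{-1}\sha(1-sz_az_b)^{-1}$ and re-express the result. Concretely, I would aim to prove a single commutation lemma computing $d$ of a $\sha$-product of a $z_c$-series with an arbitrary series and rewriting it as a $*$-product against the block factor $(1+2tB(s))^{-1}\sha(1-t^2C(s))^{-1}$; the two boundary identities are then its $s=0$ and $t=0$ restrictions.

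The hard part is precisely this commutation step, because $d$ is \emph{neither} a $*$-homomorphism \emph{nor} a $\sha$-homomorphism, so the two products genuinely interact through $d$ and cannot be separated formally. Moreover the weights $(-2)^p$ are absent on the right-hand side and must be \emph{produced} by the argument: the factor $2$ should arise from the two $\sha$-orderings of an adjacent pair of $z_c$'s and the sign from a cancellation. If the closed-form manipulation turns out to be unwieldy, the fallback is a coefficientwise proof: expand both sides through the index-set definitions $S^d_{\bullet}$, $S^{*}_{\bullet,\bullet}$, $S^{\sha}_{\bullet,\bullet}$ so that every term is a word $z_{\ve{c}}$ indexed by a tuple of order-preserving maps, and then build a sign-reversing involution cancelling the surplus left-hand terms in pairs and leaving exactly the right-hand side; a double induction on $(m,n)$ via the recursions for $*$ and $\sha$, together with an auxiliary lemma for $d$ acting on a word with a distinguished final block, is a further fallback. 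In every approach, controlling the signs and the block-merging combinatorics simultaneously is where the real difficulty lies.
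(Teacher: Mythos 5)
Your generating-function packaging is correct: the coefficient of $t^ms^n$ in your two series does recover the two sides of the theorem, and both of your specializations are true identities (your $t=0$ case is exactly the $m=0$ case of the theorem). But the proposal stops exactly where the proof has to begin. The ``commutation lemma'' that would move $d$ past the $\sha$-factor and conjure up the block factor $(1+2tB(s))^{-1}\sha(1-t^2C(s))^{-1}$ together with its signs \emph{is} the content of the theorem; you name it, correctly observe that $d$ is a homomorphism for neither $*$ nor $\sha$, and offer no mechanism for carrying it out. Knowing the two boundary specializations gives no control over the mixed coefficients of $t^ms^n$ with $m,n\ge1$, and the fallbacks (a sign-reversing involution, a double induction) are named but not constructed. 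As it stands this is a plan with a hole at its center, not a proof.

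The idea you are missing is a change of alphabet rather than a direct attack in $\H^1$: the paper lifts to the algebra $\A=\Q\langle x_\alpha\mid\alpha\in\Z_{\ge0}^3\setminus\{0\}\rangle$, on which $d$, $*$ and $\sha$ are defined by the same index-set recipes, and maps $x_{(p,q,r)}\mapsto z_{ap+bq+cr}$. In $\A$ the $\Z_{\ge0}^3$-weight grading separates the $(m,n)$-components automatically (playing the role of your $s,t$ while also keeping $a$, $b$, $c$ generic), the alternating $z_az_b$-structure is encoded by a ``goodness'' condition on letter sequences, and both sides are then expanded in closed form as $\sum_{\ve{\alpha}}C_{\ve{\alpha}}x_{\ve{\alpha}}$ with the coefficient at each letter computed independently via Lemma~\ref{lem:good_*}; the weight $(-2)^p$ disappears into an elementary binomial computation showing $C_\alpha\in\{0,\pm1\}$, with both sides equal to $\sum_{\ve{\alpha}\in\I_{\N}^{\G}}(-1)^{(\wt\ve{\alpha})\cdot e_3}x_{\ve{\alpha}}$. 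Nothing like your commutation lemma is ever proved: the two sides are simply computed and compared. If you want to salvage your route, the coefficientwise fallback is the right instinct, but you will need the three-coordinate bookkeeping (or an equivalent device) to make a letter-by-letter comparison well defined, since in $\H^1$ with specific $a,b,c$ distinct combinatorial contributions can land on the same word $z_{\ve{c}}$ (for instance when $a+b=2c$), so the involution you hope for has no canonical domain.
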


Assuming Theorem~\ref{mthm} hereafter within this subsection,
we will give a proof of our main theorem.

\begin{defn}
 A \emph{partition} of a set $X$ is a family of pairwise disjoint nonempty subsets of $X$ with union $X$.
 For $n\in\Z_{\ge1}$, the collection of all partitions of $[n]$ is denoted by $\Pi_n$.
\end{defn}

\begin{ex}
 For $n=3$, we have
 \[
  \Pi_3=\Bigl\{\bigl\{\{1\},\{2\},\{3\}\bigr\},\bigl\{\{1,2\},\{3\}\bigr\},
   \bigl\{\{1,3\},\{2\}\bigr\},\bigl\{\{2,3\},\{1\}\bigr\},\bigl\{\{1,2,3\}\bigr\}\Bigr\}.
 \]
\end{ex}

\begin{lem}\label{lem:harm_sha}
 For all $a_1,\ldots,a_n\in\Z_{\ge1}$, we have
 \[
  z_{a_1}*\cdots*z_{a_n}
  =\sum_{\P\in\Pi_n}\bigsha_{A\in\P}z_{\sum_{i\in A}a_i}.
 \]
\end{lem}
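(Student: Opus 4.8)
The plan is to argue by induction on $n$, peeling off the last factor $z_{a_n}$ and matching the resulting expansion against the partitions of $[n]$ in which the element $n$ is either a singleton block or is adjoined to one of the blocks of a partition of $[n-1]$. The base case $n=1$ is immediate, since $\Pi_1$ consists of the single partition $\bigl\{\{1\}\bigr\}$ and both sides equal $z_{a_1}$. Throughout I write $s_A=\sum_{i\in A}a_i$ for a block $A$.

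For the inductive step, since $*$ is associative and commutative I would write $z_{a_1}*\cdots*z_{a_n}=(z_{a_1}*\cdots*z_{a_{n-1}})*z_{a_n}$ and apply the induction hypothesis to the first $n-1$ factors, reducing the problem, by bilinearity, to understanding $\bigl(\bigsha_{A\in\P}z_{s_A}\bigr)*z_{a_n}$ for each $\P\in\Pi_{n-1}$. The crux is the behaviour of $*$ under right multiplication by a single letter. I would first record the elementary formula
\[
 z_{b_1}\cdots z_{b_r}*z_c
 =z_{b_1}\cdots z_{b_r}\sha z_c+\sum_{j=1}^{r}z_{b_1}\cdots z_{b_{j-1}}z_{b_j+c}z_{b_{j+1}}\cdots z_{b_r}
\]
for a word $z_{b_1}\cdots z_{b_r}$ and $c\in\Z_{\ge1}$. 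This is read off directly from the combinatorial definition of $*$ in the case of a length-one second factor: an element $(l,\sigma,\tau)\in S_{r,1}^{*}$ is determined by the single value $\tau(1)$, which either lies outside $\Im\sigma$ (and over all such choices contributes precisely the shuffle insertions of $z_c$, i.e.\ the elements of $S_{r,1}^{\sha}$) or coincides with some $\sigma(j)$ (contributing the merge $z_{b_j+c}$ in the $j$-th slot).

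Next I would apply this formula termwise to $w=\bigsha_{A\in\P}z_{s_A}$. Since the shuffle of $k$ single letters is the sum over all $k!$ orderings of its factors, one has $\bigsha_{A\in\P}z_{s_A}=\sum_{\rho}z_{s_{A_{\rho(1)}}}\cdots z_{s_{A_{\rho(k)}}}$, where $\rho$ ranges over the orderings of the blocks $A_1,\dots,A_k$ of $\P$. The shuffle part of $w*z_{a_n}$ is then $w\sha z_{a_n}=\bigsha_{A\in\P}z_{s_A}\sha z_{a_n}$, which is exactly the term indexed by the partition $\P\cup\bigl\{\{n\}\bigr\}$ of $[n]$. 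For the merge part, I would group the merges according to which block $A\in\P$ absorbs $a_n$; by the same ordering symmetry, the merges into a fixed block $A$ re-sum to the shuffle of the same single letters but with the value of $A$ raised from $s_A$ to $s_A+a_n$, that is, to the term indexed by the partition obtained from $\P$ by replacing $A$ with $A\cup\{n\}$ (whose value is indeed $s_A+a_n$). Summing over $\P$, and noting that every $\Q\in\Pi_n$ is recovered exactly once from its restriction to $[n-1]$ together with the block of $\Q$ containing $n$, the right-hand sides assemble into $\sum_{\Q\in\Pi_n}\bigsha_{B\in\Q}z_{s_B}$, completing the induction.

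The step I expect to be the main obstacle is the bookkeeping in the merge part: one must verify that summing the single-letter merges over every word appearing in the shuffle expansion of $\bigsha_{A\in\P}z_{s_A}$ reassembles cleanly into a single $\bigsha$ with one block's value increased by $a_n$. This relies precisely on the permutation symmetry of the shuffle of single letters (which holds irrespective of whether the values $s_A$ happen to coincide, since the blocks remain distinguishable as sets), and once it is in place the partition bijection of the last paragraph is routine.
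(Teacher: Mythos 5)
Your proof is correct. There is nothing in the paper to compare it against: the paper's entire proof of Lemma~\ref{lem:harm_sha} reads ``Easy. Also see \cite{Hof}.'', so your induction on $n$ is supplying an argument the authors chose to omit rather than replacing one. The two ingredients you isolate are exactly the right ones. The single-letter expansion $z_{b_1}\cdots z_{b_r}*z_{c}=z_{b_1}\cdots z_{b_r}\sha z_{c}+\sum_{j=1}^{r}z_{b_1}\cdots z_{b_{j-1}}z_{b_j+c}z_{b_{j+1}}\cdots z_{b_r}$ does follow from the combinatorial definition of $*$: for $(l,\sigma,\tau)\in S_{r,1}^{*}$ one has $l\in\{r,r+1\}$, the case $l=r$ forcing $\sigma=\id_{[r]}$ with $\tau(1)\in[r]$ free (the $r$ merge terms), and the case $l=r+1$ forcing $\tau(1)\notin\Im\sigma$ (precisely the elements of $S_{r,1}^{\sha}$, i.e.\ the $r+1$ insertions). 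The regrouping of the merge terms by absorbing block is legitimate for the reason you flag: $\bigsha_{A\in\P}z_{s_A}$ is the sum over all orderings of the blocks, which remain distinguishable as sets even when their values $s_A$ coincide, so for each fixed block the merges re-sum to the shuffle of single letters with that block's value raised by $a_n$. Combined with the standard bijection sending a partition of $[n]$ to its restriction to $[n-1]$ together with the block containing $n$, the induction closes. This is essentially the argument one finds in Hoffman's paper, and it is a correct completion of the lemma.
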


\begin{proof}
 Easy. Also see \cite{Hof}. 
\end{proof}

\begin{ex}
 For $n=3$, we have
 \[
  z_{a_1}*z_{a_2}*z_{a_3}
  =z_{a_1}\sha z_{a_2}\sha z_{a_3}
   +z_{a_1+a_2}\sha z_{a_3}+z_{a_1+a_3}\sha z_{a_2}+z_{a_2+a_3}\sha z_{a_1}+z_{a_1+a_2+a_3}.
 \]
\end{ex}

\begin{lem}\label{lem:Z_shas}
 If $a_1,\ldots,a_n\in\Z_{\ge1}$ are all even, then
 \[
  Z(z_{a_1}\sha\cdots\sha z_{a_n})\in\Q\pi^{a_1+\cdots+a_n}.
 \]
\end{lem}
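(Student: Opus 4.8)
The plan is to induct on $n$. For the base case $n=1$, the claim reduces to $Z(z_{a_1})=\zeta(a_1)\in\Q\pi^{a_1}$, which is precisely Euler's theorem since $a_1$ is even. Before running the induction I would record that every element in sight lies in $\H^0$, so that $Z$ is defined: each letter $z_c$ appearing below has an even, hence $\ge2$, subscript, and since $\sha$ never merges letters, a $\sha$-product of such single letters is a sum of admissible words $z_{c_1}\cdots z_{c_k}$ with $c_1\ge2$.

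For the inductive step I would invoke Lemma~\ref{lem:harm_sha} and isolate the contribution of the finest partition $\bigl\{\{1\},\ldots,\{n\}\bigr\}$, which is the unique partition with $n$ blocks and which contributes exactly $z_{a_1}\sha\cdots\sha z_{a_n}$. Solving for this term gives
\[
 z_{a_1}\sha\cdots\sha z_{a_n}
 =z_{a_1}*\cdots*z_{a_n}
  -\sum_{\substack{\P\in\Pi_n\\|\P|<n}}\bigsha_{A\in\P}z_{\sum_{i\in A}a_i},
\]
and I would then apply $Z$ to both sides. For the harmonic-product term I would use that $Z$ is a $*$-homomorphism on $\H^0$, so that
\[
 Z(z_{a_1}*\cdots*z_{a_n})=\zeta(a_1)\cdots\zeta(a_n)\in\Q\pi^{a_1+\cdots+a_n}
\]
by Euler's theorem applied to each even $a_i$.

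For each remaining partition $\P$ with $|\P|=k<n$, the word $\bigsha_{A\in\P}z_{\sum_{i\in A}a_i}$ is a $\sha$-product of only $k<n$ single letters, each subscript $\sum_{i\in A}a_i$ being a sum of even numbers and hence even; the induction hypothesis therefore applies and yields a value in $\Q\pi^{\sum_{A\in\P}\sum_{i\in A}a_i}$. Since $\P$ partitions $[n]$, one has $\sum_{A\in\P}\sum_{i\in A}a_i=a_1+\cdots+a_n$, so every term on the right-hand side lies in the same space $\Q\pi^{a_1+\cdots+a_n}$, and combining them proves the lemma. I do not expect a genuine obstacle here: there is no analytic input beyond Euler's theorem, and the only points requiring care are the $\H^0$-membership that makes $Z$ well defined throughout and the bookkeeping showing that the total weight $a_1+\cdots+a_n$ is preserved uniformly across all partition terms.
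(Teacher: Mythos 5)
Your proof is correct and follows essentially the same route as the paper: induction on $n$, base case via Euler's theorem, and the inductive step obtained by applying $Z$ to Lemma~\ref{lem:harm_sha}, using that $Z$ is a $*$-homomorphism and that every non-finest partition has fewer than $n$ blocks and preserves the total weight. Your extra remark on $\H^0$-membership is a sound point of care that the paper leaves implicit, but it does not change the argument.
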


\begin{proof}
 We proceed by induction on $n$.
 For $n=1$, the assertion is obvious because
 \[
  Z(z_{a_1})=\zeta(a_1)\in\Q\pi^{a_1}.
 \]
 Suppose that the lemma holds for $1,\ldots,n-1$.
 Applying $Z$ to Lemma~\ref{lem:harm_sha} gives
 \begin{align*}
  \zeta(a_1)\cdots\zeta(a_n)
  &=Z\Biggl(\sum_{\P\in\Pi_n}\bigsha_{A\in\P}z_{\sum_{i\in A}a_i}\Biggr)\\
  &=Z(z_{a_1}\sha\cdots\sha z_{a_n})
   +\sum_{\P\in\Pi_n\setminus\{\{\{1\},\ldots,\{n\}\}\}}Z\Biggl(\bigsha_{A\in\P}z_{\sum_{i\in A}a_i}\Biggr).
 \end{align*}
 If $\P\in\Pi_n\setminus\bigl\{\bigl\{\{1\},\ldots,\{n\}\bigr\}\bigr\}$, then
 $\P$ has cardinality at most $n-1$, and so the inductive hypothesis shows that
 \[
  Z\Biggl(\bigsha_{A\in\P}z_{\sum_{i\in A}a_i}\Biggr)
  \in\Q\pi^{\sum_{A\in\P}\sum_{i\in A}a_i}
  =\Q\pi^{a_1+\cdots+a_n}.
 \]
 It follows that
 \[
  Z(z_{a_1}\sha\cdots\sha z_{a_n})
  =\zeta(a_1)\cdots\zeta(a_n)
  -\sum_{\P\in\Pi_n\setminus\{\{\{1\},\ldots,\{n\}\}\}}Z\Biggl(\bigsha_{A\in\P}z_{\sum_{i\in A}a_i}\Biggr)
  \in\Q\pi^{a_1+\cdots+a_n}.
 \]
\end{proof}

\begin{lem}\label{lem:sum_sha}
 For all $p,q,k,l\in\Z_{\ge0}$, we have
 \[
  \sum_{\substack{u_1+\cdots+u_p=k\\v_1+\cdots+v_q=l}}
  Z(z_{4u_1+2}\cdots z_{4u_p+2}\sha z_{4v_1+4}\cdots z_{4v_q+4})\in\Q\pi^{2p+4q+4k+4l}.
 \]
\end{lem}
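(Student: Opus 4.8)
The plan is to reduce the claim to Lemma~\ref{lem:Z_shas}, which already settles the case of a $\sha$-product of single even letters. The difficulty is that the summand here is the $\sha$-product of two \emph{words} rather than of single letters, so Lemma~\ref{lem:Z_shas} does not apply directly (and $Z$ is not a homomorphism for $\sha$). The idea is to recover the single-letter situation by averaging over the symmetry of the composition sum.

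First I would record the elementary identity
\[
 z_{a_1}\sha\cdots\sha z_{a_r}=\sum_{\pi\in\sym_r}z_{a_{\pi(1)}}\cdots z_{a_{\pi(r)}},
\]
valid for all $a_1,\ldots,a_r\in\Z_{\ge1}$, proved by induction from the defining recursion of $\sha$ (shuffling one letter into a word inserts it into each available position). Abbreviating $W_{\ve u}=z_{4u_1+2}\cdots z_{4u_p+2}$ and $W_{\ve v}=z_{4v_1+4}\cdots z_{4v_q+4}$, I would then establish the key identity
\begin{align*}
 &\sum_{\substack{u_1+\cdots+u_p=k\\v_1+\cdots+v_q=l}}
   \bigl(z_{4u_1+2}\sha\cdots\sha z_{4u_p+2}\sha z_{4v_1+4}\sha\cdots\sha z_{4v_q+4}\bigr)\\
 &\qquad=p!\,q!\sum_{\substack{u_1+\cdots+u_p=k\\v_1+\cdots+v_q=l}}(W_{\ve u}\sha W_{\ve v}).
\end{align*}
To prove it, expand each single-letter block on the left via the formula above into a sum over $\sym_p$ and $\sym_q$; for each fixed pair $(\pi,\rho)$ of permutations, permuting the coordinates of the summation tuples $(u_1,\ldots,u_p)$ and $(v_1,\ldots,v_q)$ accordingly is a bijection of the index set of the composition sum, so the resulting inner sum equals $\sum_{\ve u,\ve v}W_{\ve u}\sha W_{\ve v}$ independently of $(\pi,\rho)$. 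Summing over the $p!\,q!$ pairs produces the factor $p!\,q!$.

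Applying $Z$ and dividing by $p!\,q!\in\Q$ turns the key identity into
\begin{align*}
 &\sum_{\substack{u_1+\cdots+u_p=k\\v_1+\cdots+v_q=l}}Z(W_{\ve u}\sha W_{\ve v})\\
 &\qquad=\frac{1}{p!\,q!}\sum_{\substack{u_1+\cdots+u_p=k\\v_1+\cdots+v_q=l}}
   Z\bigl(z_{4u_1+2}\sha\cdots\sha z_{4u_p+2}\sha z_{4v_1+4}\sha\cdots\sha z_{4v_q+4}\bigr).
\end{align*}
On the right each summand is $Z$ of a $\sha$-product of single letters with all subscripts $4u_i+2$ and $4v_j+4$ even, so Lemma~\ref{lem:Z_shas} places it in $\Q\pi^{2p+4q+4k+4l}$, the exponent being the sum $(4u_1+2)+\cdots+(4u_p+2)+(4v_1+4)+\cdots+(4v_q+4)=2p+4q+4k+4l$ of the subscripts. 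A finite $\Q$-linear combination of elements of the single space $\Q\pi^{2p+4q+4k+4l}$ again lies there, which is the assertion.

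The only substantial step is the key identity of the second paragraph; the rest is bookkeeping, once one notes that every word occurring is admissible (all subscripts are $\ge 2$) so that $Z$ may be applied. I would also dispose of the degenerate cases: if $p=0$ the composition sum is empty unless $k=0$, in which case the sole term is the empty word $z_2^0=1$, and symmetrically for $q$ and $l$; together with the conventions $0!=1$ and $Z(1)=1$ this covers the fully empty case $p=q=k=l=0$.
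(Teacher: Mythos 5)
Your proposal is correct and follows essentially the same route as the paper: symmetrize over $\sym_p\times\sym_q$ to convert the sum of $\sha$-products of words into $\frac{1}{p!\,q!}$ times a sum of $\sha$-products of single even letters, then invoke Lemma~\ref{lem:Z_shas}. The paper merely organizes the bookkeeping by first factoring the double sum as a $\sha$-product of two single sums before symmetrizing each factor, which is an immaterial difference.
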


\begin{proof}
 We first observe that
 \begin{align*}
  \sum_{u_1+\cdots+u_p=k}z_{4u_1+2}\sha\cdots\sha z_{4u_p+2}
  &=\sum_{\substack{u_1+\cdots+u_p=k\\\sigma\in\sym_p}}z_{4u_{\sigma(1)}+2}\cdots z_{4u_{\sigma(p)}+2}\\
  &=\sum_{\sigma\in\sym_p}\sum_{u_{\sigma(1)}+\cdots+u_{\sigma(p)}=k}
    z_{4u_{\sigma(1)}+2}\cdots z_{4u_{\sigma(p)}+2}\\
  &=\sum_{\sigma\in\sym_p}\sum_{u_1+\cdots+u_p=k}z_{4u_1+2}\cdots z_{4u_p+2}\\
  &=p!\sum_{u_1+\cdots+u_p=k}z_{4u_1+2}\cdots z_{4u_p+2}.
 \end{align*}
 This and a similar equation give
 \begin{align*}
  &\sum_{\substack{u_1+\cdots+u_p=k\\v_1+\cdots+v_q=l}}
   z_{4u_1+2}\cdots z_{4u_p+2}\sha z_{4v_1+4}\cdots z_{4v_q+4}\\
  &\qquad=\Biggl(\sum_{u_1+\cdots+u_p=k}z_{4u_1+2}\cdots z_{4u_p+2}\Biggr)\sha
   \Biggl(\sum_{v_1+\cdots+v_q=l}z_{4v_1+4}\cdots z_{4v_q+4}\Biggr)\\
  &\qquad=\frac{1}{p!}\Biggl(\sum_{u_1+\cdots+u_p=k}z_{4u_1+2}\sha\cdots\sha z_{4u_p+2}\Biggr)\sha
   \frac{1}{q!}\Biggl(\sum_{v_1+\cdots+v_q=l}z_{4v_1+4}\sha\cdots\sha z_{4v_q+4}\Biggr)\\
  &\qquad=\frac{1}{p!q!}\sum_{\substack{u_1+\cdots+u_p=k\\v_1+\cdots+v_q=l}}
   z_{4u_1+2}\sha\cdots\sha z_{4u_p+2}\sha z_{4v_1+4}\sha\cdots\sha z_{4v_q+4}.
 \end{align*}
 Hence the result follows from Lemma~\ref{lem:Z_shas}.
\end{proof}

\begin{lem}\label{lem:RHS}
 For all $m,n\in\Z_{\ge0}$, we have
 \[
  \sum_{j+k=n}Z\bigl(z_2^m\sha(z_3z_1)^j\bigr)\bar{Z}(z_4^k)\in\Q\pi^{2m+4n}.
 \]
\end{lem}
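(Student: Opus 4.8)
The plan is to reduce the claim to the Bowman-Bradley theorem~\cite{BowBra} for ordinary multiple zeta values together with the evaluation $\bar{Z}(z_4^k)=\zeta^{\star}(\{4\}^k)\in\Q\pi^{4k}$, which is the case $2k=4$ of the identity $\zeta^{\star}(\{2k\}^n)\in\Q\pi^{2kn}$ recalled in the introduction. The key is to recognize the ordinary multiple zeta value factor $Z\bigl(z_2^m\sha(z_3z_1)^j\bigr)$ as a Bowman-Bradley sum; once this is done the lemma follows at once.

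First I would expand $z_2^m\sha(z_3z_1)^j$. The defining recursion $z_aw\sha z_bw'=z_a(w\sha z_bw')+z_b(z_aw\sha w')$ exhibits $\sha$ as the shuffle product on the alphabet $\{z_1,z_2,\ldots\}$, so $z_2^m\sha(z_3z_1)^j$ is the sum of all interleavings of the word $z_2\cdots z_2$ (length $m$) with $z_3z_1\cdots z_3z_1$ (length $2j$). Since the latter contains no letter $z_2$, distinct interleavings give distinct words and each arises exactly once; these words are precisely those obtained by distributing the $m$ copies of $z_2$ among the $2j+1$ gaps of $z_3z_1\cdots z_3z_1$. This is the same expansion already used to pass from Theorem~\ref{thm:main} to Theorem~\ref{thm:main_with_sha}, so that
\[
 z_2^m\sha(z_3z_1)^j
 =\sum_{\substack{m_0+\cdots+m_{2j}=m\\m_0,\ldots,m_{2j}\ge0}}
 z_2^{m_0}z_3z_2^{m_1}z_1z_2^{m_2}\cdots z_3z_2^{m_{2j-1}}z_1z_2^{m_{2j}}.
\]
Applying the $\Q$-linear map $Z$ and using $Z(z_{\ve a})=\zeta(\ve a)$ then gives
\[
 Z\bigl(z_2^m\sha(z_3z_1)^j\bigr)
 =\sum_{\substack{m_0+\cdots+m_{2j}=m\\m_0,\ldots,m_{2j}\ge0}}
 \zeta(\{2\}^{m_0},3,\{2\}^{m_1},1,\ldots,3,\{2\}^{m_{2j-1}},1,\{2\}^{m_{2j}}),
\]
which is exactly the Bowman-Bradley sum. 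Hence $Z\bigl(z_2^m\sha(z_3z_1)^j\bigr)\in\Q\pi^{2m+4j}$ for $j\ge1$ by the Bowman-Bradley theorem, and for $j=0$ by Euler's evaluation $Z(z_2^m)=\zeta(\{2\}^m)\in\Q\pi^{2m}$; in all cases $Z\bigl(z_2^m\sha(z_3z_1)^j\bigr)\in\Q\pi^{2m+4j}$.

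It then remains only to combine factors. For each $(j,k)$ with $j+k=n$,
\[
 Z\bigl(z_2^m\sha(z_3z_1)^j\bigr)\bar{Z}(z_4^k)
 \in\Q\pi^{2m+4j}\cdot\Q\pi^{4k}=\Q\pi^{2m+4(j+k)}=\Q\pi^{2m+4n},
\]
and since $\Q\pi^{2m+4n}$ is a $\Q$-vector space, the finite sum over $j+k=n$ stays in $\Q\pi^{2m+4n}$, which is the assertion. The only substantive step is the identification of $Z\bigl(z_2^m\sha(z_3z_1)^j\bigr)$ with the Bowman-Bradley sum; after the combinatorial expansion of $z_2^m\sha(z_3z_1)^j$ is established, everything is immediate from the cited evaluations, so the main obstacle is simply pinning down that expansion (and being willing to invoke the multiple zeta value Bowman-Bradley theorem rather than reprove it).
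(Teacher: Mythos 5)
Your proposal is correct and follows essentially the same route as the paper: identify $Z\bigl(z_2^m\sha(z_3z_1)^j\bigr)$ with the Bowman--Bradley sum (which the paper states without the explicit shuffle expansion you supply), invoke the Bowman--Bradley theorem and the evaluation $\zeta^{\star}(\{4\}^k)\in\Q\pi^{4k}$, and multiply. The only difference is that you spell out the interleaving argument for $z_2^m\sha(z_3z_1)^j$ in more detail, which the paper treats as an observation.
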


\begin{proof}
 It suffices to show that
 \[
  Z\bigl(z_2^m\sha(z_3z_1)^j\bigr)\bar{Z}(z_4^k)\in\Q\pi^{2m+4j+4k}
 \]
 for all $j,k\in\Z_{\ge0}$.
 Observe that
 \[
  Z\bigl(z_2^m\sha(z_3z_1)^j\bigr)
  =\sum_{\substack{\sum_{i=0}^{2j}m_i=m\\m_0,\ldots,m_{2j}\ge0}}
    \zeta(\{2\}^{m_0},3,\{2\}^{m_1},1,\ldots,3,\{2\}^{m_{2j-1}},1,\{2\}^{m_{2j}})
  \in\Q\pi^{2m+4j}
 \]
 by Bowman-Bradley~\cite{BowBra} and that
 \[
  \bar{Z}(z_4^k)=\zeta^{\star}(\{4\}^k)\in\Q\pi^{4k}
 \]
 by Muneta~\cite{Mun}.
 These observations complete the proof.
\end{proof}

Now we give a proof of our main theorem (Theorem~\ref{thm:main_with_sha}),
assuming Theorem~\ref{mthm}:

\begin{proof}[Proof of Theorem~\ref{thm:main_with_sha}]
 We proceed by induction on $m+n$
 and note that the theorem is obvious if $m+n=0$, in which case
 the only possibility is $m=n=0$.

 Apply $Z$ to Theorem~\ref{mthm} and substitute $(a,b,c)=(3,1,2)$ to get
 \begin{align*}
  &\sum_{\substack{i+p+2q=m\\j+u_1+\cdots+u_p\phantom{=n}\\\phantom{j}+v_1+\cdots+v_q=n}}
  (-2)^p\bar{Z}\bigl(z_2^i\sha(z_3z_1)^j\bigr)
  Z(z_{4u_1+2}\cdots z_{4u_p+2}\sha z_{4v_1+4}\cdots z_{4v_q+4})\\
  &\qquad\qquad=(-1)^m\sum_{j+k=n}Z\bigl(z_2^m\sha(z_3z_1)^j\bigr)\bar{Z}(z_4^k),
 \end{align*}
 whose right-hand side belongs to $\Q\pi^{2m+4n}$ by Lemma~\ref{lem:RHS}.
 The left-hand side is
 \begin{align*}
  &\sum_{i=0}^{m}\sum_{j=0}^{n}\bar{Z}\bigl(z_2^i\sha(z_3z_1)^j\bigr)
   \sum_{\substack{p+2q=m-i\\k+l=n-j}}(-2)^p
   \sum_{\substack{u_1+\cdots+u_p=k\\v_1+\cdots+v_q=l}}
   Z(z_{4u_1+2}\cdots z_{4u_p+2}\sha z_{4v_1+4}\cdots z_{4v_q+4})\\
  &\qquad\in\bar{Z}\bigl(z_2^m\sha(z_3z_1)^n\bigr)+\Q\pi^{2m+4n}
 \end{align*}
 by the inductive hypothesis and Lemma~\ref{lem:sum_sha}
 because if $p+2q=m-i$ and $k+l=n-j$, then $(2i+4j)+(2p+4q+4k+4l)=2m+4n$.
 This completes the proof.
\end{proof}

\subsection{Reduction to an identity in a larger algebra}
In order to prove Theorem~\ref{mthm}, we find it convenient to consider an algebra larger than $\H^1$.
Let $\M$ denote the sub-semigroup $\Z_{\ge0}^3\setminus\{(0,0,0)\}$ of $\Z^3$, and
$\A$ the non-commutative polynomial algebra $\Q\langle x_{\alpha}\mid\alpha\in\M\rangle$.

Write $\I$ for the set of all finite sequences of elements of $\M$, including the empty sequence $\emptyset$.
For $\ve{\alpha}=(\alpha_1,\ldots,\alpha_l)\in\I$,
put $x_{\ve{\alpha}}=x_{\alpha_1}\cdots x_{\alpha_n}\in\A$, where $z_{\emptyset}=1$;
then $\{x_{\ve{\alpha}}\mid\ve{\alpha}\in\I\}$ is a $\Q$-vector space basis for $\A$.

We may define a $\Q$-linear transformation $d$ on $\A$
and $\Q$-bilinear maps $*,\sha\colon\A\times\A\to\A$ analogously as we did on $\H^1$.
Then for any $a,b,c\in\Z_{\ge1}$,
the algebra homomorphism $\A\to\H^1$ defined by
$x_{\alpha}\mapsto z_{ap+bq+cr}$ for $\alpha=(p,q,r)\in\M$ commutes with $d$, $*$ and $\sha$.
Therefore, in order to prove Theorem~\ref{mthm},
it suffices to show the following identity in $\A$:
\begin{thm}\label{thm:in A}
 For all $m,n\in\Z_{\ge0}$, we have
 \begin{align*}
  &\sum_{\substack{i+p+2q=m\\j+u_1+\cdots+u_p\phantom{=n}\\\phantom{j}+v_1+\cdots+v_q=n}}
  (-2)^pd\bigl(x_{e_3}^i\sha(x_{e_1}x_{e_2})^j\bigr)*
  (x_{(u_1,u_1,1)}\cdots x_{(u_p,u_p,1)}\sha x_{(v_1,v_1,2)}\cdots x_{(v_q,v_q,2)})\\
  &\qquad\qquad=(-1)^m\sum_{j+k=n}\bigl(x_{e_3}^m\sha(x_{e_1}x_{e_2})^j\bigr)*d(x_{(1,1,0)}^k),
 \end{align*}
 where $e_1=(1,0,0)$, $e_2=(0,1,0)$, $e_3=(0,0,1)$.
\end{thm}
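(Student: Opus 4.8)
The plan is to package the whole family of identities, over all $m,n$, into a single identity between two generating series in the completion of $\A$ with respect to formal variables $s$ and $t$, where $s$ records $m$ and $t$ records $n$. Since $d$ is linear and $*,\sha$ are bilinear, and since each coefficient of $s^mt^n$ is a finite sum, assembling the statement over all $m,n$ turns the convolution sums in Theorem~\ref{thm:in A} into plain products of series. Writing $A(s)=(1-sx_{e_3})^{-1}$, $B(t)=(1-tx_{e_1}x_{e_2})^{-1}$, and $C(t)=(1-tx_{(1,1,0)})^{-1}$, the factor $(-2)^ps^{p+2q}$ collapses the $p$- and $q$-sums into the geometric series $U(s,t)=(1+2sW_1(t))^{-1}$ and $V(s,t)=(1-s^2W_2(t))^{-1}$ in the single letters $W_1(t)=\sum_{u\ge0}x_{(u,u,1)}t^u$ and $W_2(t)=\sum_{v\ge0}x_{(v,v,2)}t^v$, while the $s^it^j$-weighted $i,j$-sum becomes $d(A(s)\sha B(t))$ by linearity of $d$. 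Thus the first step is to check carefully that Theorem~\ref{thm:in A} is equivalent to
\[
 d\bigl(A(s)\sha B(t)\bigr)*\bigl(U(s,t)\sha V(s,t)\bigr)=\bigl(A(-s)\sha B(t)\bigr)*d\bigl(C(t)\bigr),
\]
the right-hand side coming from the analogous bookkeeping together with $\sum_k t^kd(x_{(1,1,0)}^k)=d(C(t))$.

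Second, I would put the special series into closed form. Describing $d$ as the operator that merges adjacent letters and adds the corresponding vectors in $\M$ gives $d(x_{(1,1,0)}^k)=\sum x_{(k_1,k_1,0)}\cdots x_{(k_r,k_r,0)}$, the sum over all compositions $k_1+\cdots+k_r=k$, so that $d(C(t))=(1-G(t))^{-1}$ with $G(t)=\sum_{j\ge1}t^jx_{(j,j,0)}$. Hence the three series $U$, $V$, $d(C(t))$ are all rational expressions in degree-one elements (well-defined because the relevant elements have positive $s$- or $t$-valuation), which makes the right-hand side transparent and isolates the genuine difficulty in the single term $d(A(s)\sha B(t))$.

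As a warm-up and a natural first milestone I would treat $t=0$, where $B$, $G$, and the terms with $u\ge1$ or $v\ge1$ all drop out and the claim collapses to the twos-only identity
\[
 d\Bigl(\tfrac{1}{1-sx_{e_3}}\Bigr)*\Bigl(\tfrac{1}{1+2sx_{e_3}}\sha\tfrac{1}{1-s^2x_{2e_3}}\Bigr)=\tfrac{1}{1+sx_{e_3}}.
\]
This already displays the delicate cancellation (for instance the terms involving $x_{2e_3}$ cancel in each degree) and can be established on its own by the leading-letter recursions for $*$, $\sha$, and $d$. To reach the full two-variable identity I would then either (i) incorporate the block $x_{e_1}x_{e_2}$ by induction on the number $j$ of such blocks, peeling one off by the recursions for $\sha$ and for $d$, or (ii) run a direct double induction on $(m,n)$ on the original statement, splitting off the first letter of each word and matching the resulting recursions term by term.

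The hard part will be controlling $d$ on the shuffle $A(s)\sha B(t)$. Unlike on each variable in isolation, $d$ is neither a $\sha$- nor a $*$-homomorphism (already $d(x_{e_3}\sha x_{e_3})\ne d(x_{e_3})\sha d(x_{e_3})$), so one cannot simply distribute $d$ across the shuffle and reduce to the $t=0$ case. The crux is therefore a precise combinatorial accounting of how merging adjacent letters interacts with interleaving the $x_{e_3}$-blocks and the $x_{e_1}x_{e_2}$-blocks, showing that after harmonic multiplication by $U\sha V$ the surplus terms reassemble exactly into $\bigl(A(-s)\sha B(t)\bigr)*d(C(t))$; this is where the signs $(-2)^p$ and $(-1)^m$ together with the specific letters $(u,u,1)$, $(v,v,2)$, and $(j,j,0)$ must all conspire. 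I expect this merging-versus-interleaving bookkeeping, rather than any one algebraic manipulation, to be the principal obstacle.
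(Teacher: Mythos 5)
Your generating-series reformulation is sound and is essentially the paper's own first reduction: by Proposition~\ref{prop:wt_dwsha} both sides of the theorem are homogeneous of weight $(n,n,m)$, so proving the identity for all $m,n$ is equivalent to proving the single identity obtained by summing over all $m,n$ in $\Q\langle\langle x_{\alpha}\mid\alpha\in\M\rangle\rangle$ (your variables $s,t$ are redundant given this grading, but harmless). Your closed forms for the geometric factors and the computation $d(C(t))=(1-G(t))^{-1}$, i.e.\ $\sum_k d(x_{(1,1,0)}^k)=\sum_{\ve{\gamma}\in\I_{\M_0}}x_{\ve{\gamma}}$, also agree with the paper (Lemma~\ref{lem:RHS1}). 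The $t=0$ specialization you propose as a warm-up is correct but is the degenerate case.

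The problem is that everything past this bookkeeping is a plan rather than a proof, and the step you defer---``controlling $d$ on $A(s)\sha B(t)$ and showing the surplus terms reassemble''---is the entire content of the theorem; you explicitly flag it as the principal obstacle without resolving it. Neither of your two suggested routes is carried out, and neither is likely to close as stated: since $d$ is not a homomorphism for $\sha$ or $*$, peeling a leading letter off $d\bigl(x_{e_3}^i\sha(x_{e_1}x_{e_2})^j\bigr)$ produces merged letters that straddle the two shuffle factors, and the resulting recursion does not return to expressions of the same shape (this is precisely why only the cases $m=1,2$ had previously been handled by such direct methods). The paper avoids recursion altogether by producing a closed form,
\[
 \sum_{i,j}d\bigl(x_{e_3}^i\sha(x_{e_1}x_{e_2})^j\bigr)
 =\sum_{\ve{\beta}\in\I^{\G}}\langle\ve{\beta}\rangle x_{\ve{\beta}},
\]
a sum over the \emph{good} words (those whose partial sums of first-minus-second coordinates stay in $\{0,1\}$ and return to $0$) weighted by products of binomial coefficients $\binom{a_s+b_s+c_s}{c_s}$ (Lemma~\ref{lem:LHS2}); a second structural lemma (Lemma~\ref{lem:good_*}) shows that the harmonic product of such a series against any series supported on diagonal letters $(b,b,c)$ is again supported on good words with coefficients given by a letterwise convolution. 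Both sides then reduce to explicit single-letter coefficients, which collapse to $(-1)^{\alpha\cdot e_3}$ on $\N$ and $0$ elsewhere by a short Pascal-rule computation. Your proposal contains no analogue of the goodness condition or of these two lemmas, which are exactly what make $d$ of the shuffle computable and force the $e_1$'s and $e_2$'s to interleave correctly; as it stands the hard part of the theorem remains unproven.
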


\subsection{Reduction to an identity in an algebra of formal power series}
\begin{defn}
 For $\ve{\alpha}=(\alpha_1,\ldots,\alpha_l)\in\I$,
 its \emph{weight} $\wt\ve{\alpha}$ and \emph{depth} $\dep\ve{\alpha}$ are defined by
 $\wt\ve{\alpha}=\alpha_1+\cdots+\alpha_l\in\Z_{\ge0}^3$ and $\dep\ve{\alpha}=l\in\Z_{\ge0}$,
 where the empty sequence $\emptyset$ is understood to have weight $0=(0,0,0)$ and depth $0$.
 For $\alpha\in\Z_{\ge0}^3$,
 the set of all $\ve{\alpha}\in\I$ of weight $\alpha$
 is denoted by $\I_{\wt=\alpha}$, and
 the $\Q$-vector subspace of $\A$ generated by
 $\{x_{\ve{\alpha}}\mid\ve{\alpha}\in\I_{\wt=\alpha}\}$ is denoted by $\A_{\wt=\alpha}$.
\end{defn}

\begin{prop}\label{prop:wt_dwsha}
 If $w\in\A_{\wt=\alpha}$ and $w'\in\A_{\wt=\alpha'}$,
 then $d(w)\in\A_{\wt=\alpha}$ and $w*w',w\sha w'\in\A_{\wt=\alpha+\alpha'}$.
\end{prop}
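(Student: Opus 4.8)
The plan is to reduce everything to basis monomials and then to observe that, on such monomials, each of $d$, $*$, and $\sha$ merely regroups the ``letters'' $\alpha_s$ (and $\beta_t$) of the input sequences without changing the total sum of all their components; since that total sum is by definition the weight, the claimed statements will follow termwise.

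First I would invoke the $\Q$-linearity of $d$ and the $\Q$-bilinearity of $*$ and $\sha$ to reduce to the case $w=x_{\ve\alpha}$ and $w'=x_{\ve\beta}$, where $\ve\alpha=(\alpha_1,\dots,\alpha_m)\in\I_{\wt=\alpha}$ and $\ve\beta=(\beta_1,\dots,\beta_n)\in\I_{\wt=\alpha'}$, so that $\sum_{s=1}^m\alpha_s=\alpha$ and $\sum_{t=1}^n\beta_t=\alpha'$. It then suffices to show that every basis monomial appearing in $d(x_{\ve\alpha})$ has weight $\alpha$, and every one appearing in $x_{\ve\alpha}*x_{\ve\beta}$ or $x_{\ve\alpha}\sha x_{\ve\beta}$ has weight $\alpha+\alpha'$.

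For the statement about $d$, a typical term of $d(x_{\ve\alpha})$ is $x_{\ve\gamma}$ with $\gamma_i=\sum_{s\in\sigma^{-1}(i)}\alpha_s$ for $(l,\sigma)\in S_m^d$. I would then compute $\wt\ve\gamma=\sum_{i=1}^l\gamma_i=\sum_{i=1}^l\sum_{s\in\sigma^{-1}(i)}\alpha_s$, and use that surjectivity of $\sigma$ makes the fibers $\sigma^{-1}(1),\dots,\sigma^{-1}(l)$ a partition of $[m]$; the double sum thus collapses to $\sum_{s=1}^m\alpha_s=\alpha$. For $*$ and $\sha$, a typical term is $x_{\ve\gamma}$ with $\gamma_i=\sum_{s\in\sigma^{-1}(i)}\alpha_s+\sum_{t\in\tau^{-1}(i)}\beta_t$ for $(l,\sigma,\tau)$ in $S_{m,n}^*$ or $S_{m,n}^{\sha}$. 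Here $\sigma$ and $\tau$ are strictly increasing, hence injective, so their nonempty fibers partition $[m]$ and $[n]$ respectively; summing over $i$ gives $\wt\ve\gamma=\sum_{s=1}^m\alpha_s+\sum_{t=1}^n\beta_t=\alpha+\alpha'$.

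This is a pure bookkeeping argument, so I do not expect a genuine obstacle. The only points requiring care are the elementary combinatorial facts that the fibers of $\sigma$ (and of $\tau$) partition the appropriate index set---surjectivity for $d$, injectivity together with $\Im\sigma\cup\Im\tau=[l]$ for the products---and the tacit verification that each $\gamma_i$ is again a nonzero element of $\M$, so that $x_{\ve\gamma}$ is a legitimate basis vector. The latter holds because every $\gamma_i$ is a sum of at least one element of $\M\subseteq\Z_{\ge0}^3$, and a sum of finitely many nonzero vectors with nonnegative entries can never vanish.
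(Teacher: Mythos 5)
Your proposal is correct and follows essentially the same route as the paper: reduce to basis monomials by (bi)linearity and observe that each of $d$, $*$, $\sha$ only regroups the letters via the fibers of $\sigma$ (and $\tau$), so the total weight is preserved. The paper writes out only the $d$ case and declares the others analogous, whereas you carry out all three and additionally note that each $\gamma_i$ lies in $\M$; both are welcome but inessential refinements.
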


\begin{proof}
 We only prove that $d(w)\in\A_{\wt=\alpha}$; the other assertions can be shown in a similar manner.
 We may assume that $w=x_{\ve{\alpha}}$, where $\wt\ve{\alpha}=\alpha$.
 Put $k=\dep\ve{\alpha}$ and write $\ve{\alpha}=(\alpha_1,\ldots,\alpha_k)$.
 Let $(l,\sigma)\in S_k^d$ and define $\ve{\beta}=(\beta_1,\ldots,\beta_l)$ by
 $\beta_t=\sum_{s\in\sigma^{-1}(t)}\alpha_s$.
 Then we have
 \[
  \wt\ve{\beta}=\sum_{t=1}^{l}\beta_t=\sum_{t=1}^{l}\sum_{s\in\sigma^{-1}(t)}\alpha_s
  =\sum_{s=1}^{k}\alpha_s=\wt\ve{\alpha}=\alpha.
 \]
 It follows that $x_{\ve{\beta}}\in\A_{\wt=\alpha}$, which completes the proof.
\end{proof}

The proposition above implies that both sides in Theorem~\ref{thm:in A}
have weight $(n,n,m)$.
Therefore it suffices to show that
\begin{align*}
 &\sum_{m,n}\sum_{\substack{i+p+2q=m\\j+u_1+\cdots+u_p\phantom{=n}\\\phantom{j}+v_1+\cdots+v_q=n}}
 (-2)^pd\bigl(x_{e_3}^i\sha(x_{e_1}x_{e_2})^j\bigr)*
 (x_{(u_1,u_1,1)}\cdots x_{(u_p,u_p,1)}\sha x_{(v_1,v_1,2)}\cdots x_{(v_q,v_q,2)})\\
 &\qquad\qquad=\sum_{m,n}(-1)^m\sum_{j+k=n}\bigl(x_{e_3}^m\sha(x_{e_1}x_{e_2})^j\bigr)*d(x_{(1,1,0)}^k)
\end{align*}
in the algebra $\Q\langle\langle x_{\alpha}\mid\alpha\in\M\rangle\rangle$ of formal power series.
Observe that $d$, $*$ and $\sha$ are well defined in
$\Q\langle\langle x_{\alpha}\mid\alpha\in\M\rangle\rangle$.
It follows that the following theorem implies our main theorem:
\begin{thm}\label{thm:in_formal}
 We have
 \begin{align*}
  &\sum_{i,j,p,q,u_1,\ldots,u_p,v_1,\ldots,v_q}
  (-2)^pd\bigl(x_{e_3}^i\sha(x_{e_1}x_{e_2})^j\bigr)*
  (x_{(u_1,u_1,1)}\cdots x_{(u_p,u_p,1)}\sha x_{(v_1,v_1,2)}\cdots x_{(v_q,v_q,2)})\\
  &\qquad\qquad=\sum_{j,k,m}(-1)^m\bigl(x_{e_3}^m\sha(x_{e_1}x_{e_2})^j\bigr)*d(x_{(1,1,0)}^k)
 \end{align*}
 in $\Q\langle\langle x_{\alpha}\mid\alpha\in\M\rangle\rangle$.
\end{thm}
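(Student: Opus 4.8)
The plan is to absorb the free summation indices $i,j,p,q$ (and the variable-length tuples $u_\bullet,v_\bullet$) into generating series, turning the multiply-indexed statement into one closed identity. Since $d$, $*$ and $\sha$ all preserve the weight grading (Proposition~\ref{prop:wt_dwsha}) and each $\A_{\wt=\alpha}$ is finite-dimensional, every series below is a well-defined element of $\Q\langle\langle x_{\alpha}\mid\alpha\in\M\rangle\rangle$, and the (bi)linearity of the three operations lets me pull all the sums outside. Writing
\[ g=\sum_{i\ge0}x_{e_3}^i,\quad \bar{g}=\sum_{m\ge0}(-1)^mx_{e_3}^m,\quad h=\sum_{j\ge0}(x_{e_1}x_{e_2})^j, \]
\[ A=\sum_{p\ge0}(-2)^p\Bigl(\sum_{u\ge0}x_{(u,u,1)}\Bigr)^p,\qquad B=\sum_{q\ge0}\Bigl(\sum_{v\ge0}x_{(v,v,2)}\Bigr)^q,\qquad F=d\Bigl(\sum_{k\ge0}x_{(1,1,0)}^k\Bigr), \]
the left-hand side of Theorem~\ref{thm:in_formal} becomes $d(g\sha h)*(A\sha B)$ and the right-hand side becomes $(\bar{g}\sha h)*F$. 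Thus it suffices to prove the single identity $d(g\sha h)*(A\sha B)=(\bar{g}\sha h)*F$, which I call $(\star)$.

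To attack $(\star)$ I would unfold the map-based definitions of $d$, $*$ and $\sha$, which were recast in exactly this form precisely to keep composite operations tractable. The shuffle $g\sha h$ is a sum over interleavings of a run of letters $x_{e_3}$ with a run of blocks $x_{e_1}x_{e_2}$; applying $d$ overlays a nondecreasing surjection fusing consecutive letters; and the outer harmonic product with $A\sha B$ overlays a further pair of maps that interleave and partially fuse the letters $x_{(u,u,1)}$ (each weighted by $-2$) and $x_{(v,v,2)}$. Each side of $(\star)$ then becomes a signed sum of monomials $x_{\ve{\gamma}}$ indexed by these layered configurations, and the goal is to match the coefficient of every $x_{\ve{\gamma}}$.

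The mechanism I expect to drive the proof is a cancellation in the third coordinate, the one recording how many copies of $e_3$ have been amalgamated into a block. A maximal block of the output with third coordinate $t\ge1$ can be assembled on the left in several competing ways — as a run of letters $x_{e_3}$ fused by $d$, as such a run harmonically merged with an $x_{(u,u,1)}$ from $A$ (which imports the factor $-2$), or merged with an $x_{(v,v,2)}$ from $B$ — and these should cancel in signed groups. The smallest nontrivial case already shows this: in weight $(0,0,2)$ the three ways of producing $x_{(0,0,2)}$ carry coefficients $1,-2,1$, whose sum is $(1-1)^2=0$. I would encode these cancellations as a sign-reversing involution, equivalently a block-by-block binomial identity $(1-1)^t=0$, annihilating every configuration in which some $x_{e_3}$ has been fused with a letter from $A$ or $B$ and leaving only the configurations with isolated surviving $x_{e_3}$'s; these reassemble $\bar{g}$, while the untouched blocks $x_{e_1}x_{e_2}$ together with the residual fusions that create the $x_{(s,s,0)}$-letters of $F$ should reassemble $(\bar{g}\sha h)*F$.

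The hard part will be making this cancellation rigorous while all three operations act simultaneously. The crux is that $d$ may fuse an $x_{e_3}$ with an adjacent $x_{e_1}$ or $x_{e_2}$ before the harmonic product intervenes, producing asymmetric letters such as $x_{(1,0,1)}$ that can never occur on the right-hand side of $(\star)$; the involution must therefore be defined on the fully interleaved-and-fused configurations and must annihilate exactly these spurious terms without disturbing the symmetric first/second-coordinate data that has to survive to form $F$. To manage this I would first prove the component of $(\star)$ with trivial first/second coordinate as a standalone lemma about words in $x_{e_3}$ and $x_{(0,0,2)}$, where the involution is cleanest and the binomial cancellation is transparent, and then lift it to the general statement by transporting the $x_{e_1}x_{e_2}$-data inertly through the same pairing.
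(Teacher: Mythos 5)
Your overall strategy coincides with the paper's: package both sides into generating series in $\Q\langle\langle x_{\alpha}\mid\alpha\in\M\rangle\rangle$, expand them through the map-based definitions of $d$, $*$ and $\sha$, and match the coefficient of every monomial $x_{\ve{\alpha}}$, the point being that the comparison localizes to a single identity per output letter. That skeleton is sound, and the two structural facts your plan would force you to establish are both true: the coefficient of $x_{\ve{\beta}}$ in $d(g\sha h)$ is $\prod_s\binom{a_s+b_s+c_s}{c_s}$ when $\ve{\beta}=((a_1,b_1,c_1),\ldots)$ is good and $0$ otherwise (Lemma~\ref{lem:LHS2}), and the harmonic product of a series supported on good words with a series supported on words in symmetric letters $(k,k,c)$ has coefficients that factor letter-by-letter over the output word (Lemma~\ref{lem:good_*}). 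Your observation that letters such as $(1,0,1)$ cannot appear on the right-hand side, and must therefore receive coefficient $0$ on the left, is also correct.

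The genuine gap is in the cancellation step itself. The local coefficient of a letter $\alpha=(a,b,c)$ on the left-hand side is
\[
 C_{\alpha}=\binom{a+b+c}{c}-2\sum_{k=0}^{\min\{a,b\}}\binom{a+b+c-2k-1}{c-1}
 +\sum_{k=0}^{\min\{a,b\}}\binom{a+b+c-2k-2}{c-2},
\]
and this is \emph{not} a block-by-block $(1-1)^t=0$ with the $x_{e_1}x_{e_2}$-data riding along inertly: the first two coordinates enter essentially, both through the multiplicity $\binom{a+b+c}{c}$ (the number of good interleavings of the $e_1$'s, $e_2$'s and $e_3$'s inside one $d$-fused block) and through the range $0\le k\le\min\{a,b\}$ of letters of $A$ or $B$ that can be harmonically merged into that block. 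For instance the letter $(1,1,1)$ receives $3-2-2=-1$ while its pure-third-coordinate shadow $(0,0,1)$ receives $1-2=-1$; the values agree but the cancellation patterns do not, so your proposed lift from the $x_{e_3}$-only lemma to the general case by ``transporting the data inertly'' is not a valid step. Moreover a sign-reversing involution that merely annihilates is not enough: you must compute the exact residues $C_{\alpha}=1$ for $c=0$, $C_{\alpha}=-1$ for $\alpha=(a,a,1)$, and $C_{\alpha}=0$ for $c=1$ with $\lvert a-b\rvert=1$ and for all $c\ge2$, and check that they reproduce the (separately computed) local coefficients $(-1)^{\alpha\cdot e_3}$ on $\N$ and $0$ off $\N$ of the right-hand side. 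The paper does this by a telescoping application of Pascal's rule; some such three-parameter binomial computation is unavoidable and is exactly the content your sketch elides.
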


\section{Proof of Theorem~\ref{thm:in_formal}}
This section will be devoted to the proof of Theorem~\ref{thm:in_formal} by expanding both sides.

\subsection{Basic properties of the operations}
\begin{defn}
 We say that $\ve{\alpha}=(\alpha_1,\ldots,\alpha_l)\in\I$ is \emph{good}
 if, writing $\alpha_s=(a_s,b_s,c_s)$ for $s\in[l]$,
 we have $\sum_{s=1}^{s'}(a_s-b_s)\in\{0,1\}$ for all $s'\in[l]$
 and $\sum_{s=1}^{l}(a_s-b_s)=0$.
 The set of all good $\ve{\alpha}\in\I$ is denoted by $\I^{\G}$,
 and write
 $\A^{\G}=\{\sum_{\ve{\alpha}\in\I^{\G}}q_{\ve{\alpha}}x_{\ve{\alpha}}\mid q_{\ve{\alpha}}\in\Q\}$,
 which is a $\Q$-vector subspace of $\Q\langle\langle x_{\alpha}\mid\alpha\in\M\rangle\rangle$.

 We say that $\alpha\in\M$ is \emph{great} if, writing $\alpha=(a,b,c)$, we have $\lvert a-b\rvert\le1$.
 The set of all great $\alpha\in\M$ is denoted by $\M^{\G}$.
\end{defn}

\begin{lem}
 If $\ve{\alpha}=(\alpha_1,\ldots,\alpha_l)\in\I$ is good, then
 $\alpha_s$ is great for all $s\in[l]$.
\end{lem}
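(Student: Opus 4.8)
The plan is to recognize that the quantity $a_s-b_s$ governing greatness is exactly a single increment of the partial-sum sequence whose values the goodness condition pins down. So rather than work with the individual components directly, I would reformulate both hypotheses in terms of these partial sums and read off the conclusion by a telescoping.

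First I would fix notation by writing $\alpha_s=(a_s,b_s,c_s)$ for $s\in[l]$ and introducing the partial sums $P_{s'}=\sum_{s=1}^{s'}(a_s-b_s)$ for $0\le s'\le l$, with the convention $P_0=0$ that the empty sum vanishes. With this notation the definition of good says precisely that $P_{s'}\in\{0,1\}$ for every $s'\in[l]$ (together with the closing condition $P_l=0$). The key observation is then the telescoping identity $a_s-b_s=P_s-P_{s-1}$ for each $s\in[l]$. For $s\ge2$ both $P_s$ and $P_{s-1}$ lie in $\{0,1\}$ by goodness, so their difference lies in $\{-1,0,1\}$; for $s=1$ we have $a_1-b_1=P_1-P_0=P_1\in\{0,1\}\subseteq\{-1,0,1\}$ since $P_0=0$. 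In either case $\lvert a_s-b_s\rvert\le1$, which is exactly the assertion that $\alpha_s$ is great.

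I do not expect any genuine obstacle here: the statement follows by pure telescoping, and I would remark in passing that the closing condition $P_l=0$ plays no role in this lemma—only the partial-sum constraint $P_{s'}\in\{0,1\}$ is used. The single point requiring a moment's care is the boundary index $s=1$, where there is no preceding partial sum; this is dispatched cleanly by adopting the convention $P_0=0$, so that the uniform bound $a_s-b_s=P_s-P_{s-1}\in\{-1,0,1\}$ covers all $s\in[l]$ simultaneously and no separate case analysis is truly needed.
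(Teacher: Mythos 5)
Your proof is correct; the telescoping observation $a_s-b_s=P_s-P_{s-1}$ with $P_{s'}\in\{0,1\}$ (and $P_0=0$) is exactly the reason the paper dismisses this lemma with the single word ``Obvious.'' Your write-up simply makes that implicit argument explicit, and your side remark that the closing condition $P_l=0$ is not needed is accurate.
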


\begin{proof}
 Obvious.
\end{proof}

\begin{prop}\label{prop:good_dwsha}
 If $w,w'\in\A^{\G}$, then $d(w),w*w',w\sha w'\in\A^{\G}$.
\end{prop}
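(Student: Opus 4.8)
The plan is to show that each of the three operations $d$, $*$, $\sha$ preserves the subspace $\A^{\G}$, handling them one at a time. Since $\A^{\G}$ is a $\Q$-vector subspace spanned by the $x_{\ve{\alpha}}$ with $\ve{\alpha}\in\I^{\G}$, and each operation is $\Q$-linear (resp. $\Q$-bilinear), it suffices to verify that the operations send good basis elements to $\Q$-linear combinations of good basis elements. Concretely, I would fix good sequences $\ve{\alpha}=(\alpha_1,\ldots,\alpha_k)$ and $\ve{\beta}=(\beta_1,\ldots,\beta_n)$, write $\alpha_s=(a_s,b_s,c_s)$ and $\beta_t=(a'_t,b'_t,c'_t)$, and examine the sequence $\ve{\gamma}$ appearing in each term of $d(x_{\ve{\alpha}})$, $x_{\ve{\alpha}}*x_{\ve{\beta}}$, and $x_{\ve{\alpha}}\sha x_{\ve{\beta}}$, checking the two defining conditions of goodness: that every partial sum $\sum_{s=1}^{s'}(\text{first}-\text{second coordinate})$ lies in $\{0,1\}$, and that the total such sum is $0$.

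First I would treat $d$, which I expect to be the easiest case. A term of $d(x_{\ve{\alpha}})$ corresponds to a nondecreasing surjection $\sigma\colon[k]\to[l]$, and the resulting sequence $\ve{\gamma}=(\gamma_1,\ldots,\gamma_l)$ has coordinates obtained by \emph{consolidating consecutive blocks} of $\ve{\alpha}$. The partial sums $\sum_{t=1}^{t'}(\text{first}-\text{second})$ of $\ve{\gamma}$ form a \emph{subsequence} of the partial sums of $\ve{\alpha}$ (those at block boundaries), so they automatically lie in $\{0,1\}$, and the total sum is unchanged and hence $0$. I would record this as the key observation that $d$ only merges adjacent entries and thus cannot create a bad partial sum.

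The heart of the matter is the products $*$ and $\sha$, and this is where I expect the main obstacle to be. A term of $x_{\ve{\alpha}}\sha x_{\ve{\beta}}$ corresponds to an interleaving of the two sequences (the $\sha$ case), while $*$ additionally allows overlapping one $\alpha_s$ with one $\beta_t$. The difficulty is that interleaving two good sequences need not obviously preserve the partial-sum condition: the partial sum of the merged sequence at a given point is the sum of a partial sum from $\ve{\alpha}$ and a partial sum from $\ve{\beta}$, and a priori $\{0,1\}+\{0,1\}=\{0,1,2\}$, so the value $2$ threatens to appear. Resolving this is the crux, and I would handle it by a careful bookkeeping argument: at any position in the shuffle where we have consumed $s'$ entries of $\ve{\alpha}$ and $t'$ entries of $\ve{\beta}$, I would track not just the partial sum but exploit the fact that the last-consumed entry came from one specific sequence. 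The plan is to show by induction on the length of the merged prefix that the running total stays in $\{0,1\}$, using the great-ness of individual entries (each $|a_s-b_s|\le1$, established in the preceding lemma) together with the goodness of each factor to rule out the jump to $2$ or the drop to $-1$. For the overlap terms in $*$, where some $\gamma_i=\alpha_s+\beta_t$, I would check separately that the combined first-minus-second coordinate $(a_s+a'_t)-(b_s+b'_t)$ interacts correctly with the surrounding partial sums; the total sum over all of $\ve{\gamma}$ is $\sum(a_s-b_s)+\sum(a'_t-b'_t)=0+0=0$ regardless of the interleaving, so only the partial-sum bound requires real work.

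I would organize the write-up so that the shuffle and overlap analyses for $*$ and $\sha$ share a single combined lemma on interleavings of good sequences, since $S_{m,n}^{\sha}\subseteq S_{m,n}^{*}$ and the overlap case is an incremental addition. The expected payoff is that once the interleaving partial-sum bound is established, both product statements and the $d$ statement follow immediately by $\Q$-(bi)linearity, completing the proof that $\A^{\G}$ is closed under $d$, $*$, and $\sha$.
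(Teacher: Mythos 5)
Your treatment of $d$ is exactly the paper's argument: under a nondecreasing surjection the partial sums of the consolidated word form a subsequence of the partial sums of the original word, so goodness is preserved. The product cases are where your plan breaks down. You correctly isolate the crux --- a partial sum of an interleaving is the sum of a partial sum from each factor, so the value $2$ ``threatens to appear'' --- but the bookkeeping argument you propose to rule this out cannot exist, because the threat is realized: closure of $\A^{\G}$ under $\sha$ (and hence under $*$) is false as stated. Take $w=w'=x_{(e_1,e_2)}$ with $e_1=(1,0,0)$ and $e_2=(0,1,0)$; the word $(e_1,e_2)$ is good, with partial differences $1,0$. But
\[
 x_{(e_1,e_2)}\sha x_{(e_1,e_2)}=2\,x_{(e_1,e_2,e_1,e_2)}+4\,x_{(e_1,e_1,e_2,e_2)},
\]
and the word $(e_1,e_1,e_2,e_2)$ has second partial sum equal to $2$, so it is not good, yet it appears with nonzero coefficient; hence $w\sha w'\notin\A^{\G}$. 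Greatness of the individual letters cannot save you --- every letter in this example is great --- so no induction on prefixes of the shuffle, however carefully bookkept, will close the gap.

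To be fair, the paper's own proof only carries out the $d$ case and dismisses $*$ and $\sha$ as ``similar,'' which is an overstatement of the same kind. What is true, and what is all that is ever used (in the Corollary following Proposition~\ref{prop:good_dwsha} and in Lemma~\ref{lem:good_*}), is closure under $*$ and $\sha$ when one of the two factors is supported on words all of whose letters lie in $\{(a,a,c)\mid a,c\in\Z_{\ge0}\}$: such letters contribute $0$ to every partial difference, so every partial sum of any term of the product equals a partial sum of the good factor and therefore stays in $\{0,1\}$, exactly by your decomposition. The constructive fix is to prove that restricted statement (immediate from your partial-sum analysis) and check that it suffices for every application in the paper, rather than attempting the general closure, which is false.
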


\begin{proof}
 We only prove that $d(w)\in\A^{\G}$; the other assertions can be shown in a similar manner.
 We may assume that $w=x_{\ve{\alpha}}$, where $\ve{\alpha}\in\I^{\G}$.
 Put $k=\dep\ve{\alpha}$ and write $\ve{\alpha}=(\alpha_1,\ldots,\alpha_k)$ and
 $\alpha_s=(a_s,b_s,c_s)$ for $s\in[k]$.
 Let $(l,\sigma)\in S_k^d$ and define $\ve{\beta}=(\beta_1,\ldots,\beta_l)$ by
 $\beta_t=\sum_{s\in\sigma^{-1}(t)}\alpha_s$.
 If we write $\beta_t=(\tilde{a}_t,\tilde{b}_t,\tilde{c}_t)$ for $t\in[l]$, then
 \[
  \sum_{t=1}^{t'}(\tilde{a}_t-\tilde{b}_t)
  =\sum_{t=1}^{t'}\sum_{s\in\sigma^{-1}(t)}(a_s-b_s)
  =\sum_{s=1}^{\max\sigma^{-1}(t')}(a_s-b_s)
  \in\{0,1\}
 \]
 for all $t'\in[l]$ because $\ve{\alpha}$ is good.
 It follows that $x_{\ve{\beta}}\in\A^{\G}$, which completes the proof.
\end{proof}

\begin{cor}
 Both sides in Theorem~\ref{thm:in_formal} belong to $\A^{\G}$.
\end{cor}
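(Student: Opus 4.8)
The plan is to observe that both sides of Theorem~\ref{thm:in_formal} are built, via operations already known to preserve goodness, from a short list of manifestly good generators. Since $\A^{\G}$ is by definition closed under arbitrary $\Q$-linear combinations---including the formal infinite sums occurring in the theorem---it suffices to check that each individual summand on each side lies in $\A^{\G}$.

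First I would record one closure property not covered by Proposition~\ref{prop:good_dwsha}, namely that $\A^{\G}$ is closed under the ordinary (concatenation) product of $\A$. Indeed, if $\ve{\alpha}=(\alpha_1,\ldots,\alpha_l)$ and $\ve{\beta}=(\beta_1,\ldots,\beta_{l'})$ are good, with $\alpha_s=(a_s,b_s,c_s)$ and similarly for $\ve{\beta}$, then in the concatenation $\ve{\alpha}\ve{\beta}$ the partial sums $\sum(a_s-b_s)$ over an initial segment lying within the $\ve{\alpha}$-part coincide with those of $\ve{\alpha}$, while over an initial segment extending into the $\ve{\beta}$-part they equal the (vanishing) total of $\ve{\alpha}$ plus an initial partial sum of $\ve{\beta}$; in either case the value lies in $\{0,1\}$, and the grand total is $0+0=0$. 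Hence $\ve{\alpha}\ve{\beta}$ is good, so products of good monomials are good.

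Next I would check by inspection that the letters and words appearing in the theorem are good. For a single letter $x_{\alpha}$ with $\alpha=(a,b,c)\in\M$, goodness reduces to $a-b=0$; thus $x_{e_3}=x_{(0,0,1)}$, each $x_{(u,u,1)}$, each $x_{(v,v,2)}$ and $x_{(1,1,0)}$ are good. For the word $x_{e_1}x_{e_2}$ the partial sums of $a_s-b_s$ are $1$ and then $0$, both in $\{0,1\}$ with total $0$, so it too is good.

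Finally I would assemble everything. By closure under concatenation, $x_{e_3}^i$, $(x_{e_1}x_{e_2})^j$, $x_{(u_1,u_1,1)}\cdots x_{(u_p,u_p,1)}$, $x_{(v_1,v_1,2)}\cdots x_{(v_q,v_q,2)}$ and $x_{(1,1,0)}^k$ all lie in $\A^{\G}$. Applying Proposition~\ref{prop:good_dwsha} to $\sha$ and then to $d$ gives $d\bigl(x_{e_3}^i\sha(x_{e_1}x_{e_2})^j\bigr)\in\A^{\G}$ and $d(x_{(1,1,0)}^k)\in\A^{\G}$, and applying it once more to $\sha$ and $*$ shows that every summand on the left-hand side and every summand on the right-hand side lies in $\A^{\G}$; summing over all indices keeps us in $\A^{\G}$. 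No genuine difficulty arises here: the only step going beyond a direct invocation of Proposition~\ref{prop:good_dwsha} is the closure under the ordinary product established in the second paragraph, and that is immediate from the fact that the partial-sum condition resets to $0$ between the two factors.
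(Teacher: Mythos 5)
Your proof is correct and follows essentially the same route as the paper: the paper's own argument is simply to invoke Proposition~\ref{prop:good_dwsha} together with the observation that $x_{e_3}^i$, $(x_{e_1}x_{e_2})^j$, $x_{(1,1,0)}^k$, $x_{(u_1,u_1,1)}\cdots x_{(u_p,u_p,1)}$ and $x_{(v_1,v_1,2)}\cdots x_{(v_q,v_q,2)}$ all lie in $\A^{\G}$, which is exactly what you do. Your explicit verification of closure under concatenation and of the goodness of each generator merely spells out the details the paper leaves to the reader.
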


\begin{proof}
 The corollary follows from Proposition~\ref{prop:good_dwsha} and the observation that
 $x_{e_3}^i$, $(x_{e_1}x_{e_2})^j$, $x_{(1,1,0)}^k$,
 $x_{(u_1,u_1,1)}\cdots x_{(u_p,u_p,1)}$ and
 $x_{(v_1,v_1,2)}\cdots x_{(v_q,v_q,2)}$ all belong to $\A^{\G}$.
\end{proof}

\begin{defn}
 For $\M'\subset\M$,
 write $\I_{\M'}$ for the set of all $\ve{\alpha}\in\I$ whose components are all in $\M'$,
 and put $\I_{\M'}^{\G}=\I_{\M'}\cap\I^{\G}$.
\end{defn}

\begin{lem}\label{lem:good_*}
 Let $\M',\M''\subset\M$ and suppose that $\M''\subset\{(a,a,b)\mid a,b\in\Z_{\ge0}\}$.
 Given $A_{\beta},B_{\gamma}\in\Q$ for each $\beta\in\M'$ and $\gamma\in\M''$,
 write
 \[
  A_{\ve{\beta}}=\prod_{s=1}^{\dep\ve{\beta}}A_{\beta_s},\qquad
  B_{\ve{\gamma}}=\prod_{t=1}^{\dep\ve{\gamma}}B_{\gamma_t}
 \]
 for $\ve{\beta}=(\beta_1,\ldots,\beta_{\dep\ve{\beta}})\in\I_{\M'}$ and
 $\ve{\gamma}=(\gamma_1,\ldots,\gamma_{\dep\ve{\gamma}})\in\I_{\M''}$.
 Then we have
 \[
  \sum_{\ve{\beta}\in\I_{\M'}^{\G}}\sum_{\ve{\gamma}\in\I_{\M''}}
  A_{\ve{\beta}}B_{\ve{\gamma}}(x_{\ve{\beta}}*x_{\ve{\gamma}})
  =\sum_{\ve{\alpha}=(\alpha_1,\ldots,\alpha_l)\in\I^{\G}}
   \Biggl(\prod_{s=1}^{l}\sum_{\substack{\beta\in\M'\cup\{0\}\\\gamma\in\M''\cup\{0\}\\\beta+\gamma=\alpha_s}}
   A_{\beta}B_{\gamma}\Biggr)x_{\ve{\alpha}},
 \]
 where we define $A_0=B_0=1$.
\end{lem}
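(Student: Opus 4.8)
The plan is to expand the left-hand side by unfolding the definition of the harmonic product $*$ and to match the resulting terms against the right-hand side coefficient by coefficient, i.e. by comparing the coefficient of each $x_{\ve{\alpha}}$. First I would record the combinatorial meaning of a single product $x_{\ve{\beta}}*x_{\ve{\gamma}}$: since the maps $\sigma,\tau$ in a triple $(l,\sigma,\tau)\in S_{\dep\ve{\beta},\dep\ve{\gamma}}^{*}$ are strictly increasing and satisfy $\Im\sigma\cup\Im\tau=[l]$, every output component $\alpha_s$ is either a single component of $\ve{\beta}$, a single component of $\ve{\gamma}$, or the sum of one of each. Equivalently, a term $x_{\ve{\alpha}}$ produced by $x_{\ve{\beta}}*x_{\ve{\gamma}}$ corresponds to a choice, at every position $s\in[l]$, of a pair $(\beta^{(s)},\gamma^{(s)})$ with $\beta^{(s)}\in\M'\cup\{0\}$, $\gamma^{(s)}\in\M''\cup\{0\}$ and $\beta^{(s)}+\gamma^{(s)}=\alpha_s$, such that the nonzero $\beta^{(s)}$ recover $\ve{\beta}$ in order and the nonzero $\gamma^{(s)}$ recover $\ve{\gamma}$ in order.

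Next I would verify that this assignment is a bijection between the triples $(\ve{\beta},\ve{\gamma},(l,\sigma,\tau))$ producing a given $\ve{\alpha}$ and the position-wise decompositions $(\beta^{(s)},\gamma^{(s)})_{s=1}^{l}$ of $\ve{\alpha}$; the inverse simply reads off $\sigma$ and $\tau$ as the increasing maps recording the nonzero $\beta$- and $\gamma$-positions. Because $\alpha_s\in\M$ is never $0$, the pair $(\beta^{(s)},\gamma^{(s)})=(0,0)$ never occurs, which matches the surjectivity condition $\Im\sigma\cup\Im\tau=[l]$. Under the convention $A_0=B_0=1$, the coefficient $A_{\ve{\beta}}B_{\ve{\gamma}}$ equals $\prod_{s=1}^{l}A_{\beta^{(s)}}B_{\gamma^{(s)}}$, since the omitted components contribute factors of $1$. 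As the decompositions at distinct positions are independent (and, for fixed $\ve{\alpha}$, finite in number), summing over all of them factors as $\prod_{s=1}^{l}\sum_{\beta+\gamma=\alpha_s}A_{\beta}B_{\gamma}$, which is exactly the coefficient appearing on the right-hand side.

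The remaining, and in my view the main, point is to match the ranges of summation, that is, to show that restricting $\ve{\beta}$ to $\I_{\M'}^{\G}$ on the left corresponds precisely to restricting $\ve{\alpha}$ to $\I^{\G}$ on the right. This is where the hypothesis $\M''\subset\{(a,a,b)\mid a,b\in\Z_{\ge0}\}$ is essential: every $\gamma\in\M''\cup\{0\}$ has vanishing first-minus-second coordinate, so the first-minus-second coordinate of $\alpha_s=\beta^{(s)}+\gamma^{(s)}$ equals that of $\beta^{(s)}$. Consequently the partial sums $\sum_{s=1}^{s'}(a_s-b_s)$ for $\ve{\alpha}$ form a stuttering copy of the corresponding partial sums for $\ve{\beta}$: they stay constant across the positions where $\beta^{(s)}=0$ and otherwise advance by the next increment coming from $\ve{\beta}$. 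Hence the two attain exactly the same set of values and share the same final value, so $\ve{\alpha}$ is good if and only if $\ve{\beta}$ is good, irrespective of $\ve{\gamma}$. The hard part will be phrasing this stuttering comparison cleanly enough that the good/not-good dichotomy transfers with no gaps; once it is in place, combining it with the coefficient computation above yields the claimed identity.
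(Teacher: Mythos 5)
Your proposal is correct and follows essentially the same route as the paper: both expand the harmonic product into position-wise decompositions $\alpha_s=\beta+\gamma$ (with the strictly increasing maps $\sigma,\tau$ read off from the nonzero positions) and match coefficients of $x_{\ve{\alpha}}$, using the hypothesis $\M''\subset\{(a,a,b)\mid a,b\in\Z_{\ge0}\}$ to transfer goodness between $\ve{\alpha}$ and $\ve{\beta}$. The only cosmetic difference is that the paper gets one direction of the goodness equivalence by citing Proposition~\ref{prop:good_dwsha} (the left-hand side lies in $\A^{\G}$), whereas you argue both directions directly via the partial-sum comparison.
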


\begin{proof}
 Since $\I_{\M''}\subset\I^{\G}$ by assumption,
 the left-hand side belongs to $\A^{\G}$ because of Proposition~\ref{prop:good_dwsha}.
 Therefore it suffices to show that for each $\ve{\alpha}=(\alpha_1,\ldots,\alpha_l)\in\I^{\G}$,
 the coefficient of $x_{\ve{\alpha}}$ in the left-hand side is equal to
 \[
  \prod_{s=1}^{l}\sum_{\substack{\beta\in\M'\cup\{0\}\\\gamma\in\M''\cup\{0\}\\\beta+\gamma=\alpha_s}}
  A_{\beta}B_{\gamma}
  =\sum_{\substack{\tilde{\beta}_1,\ldots,\tilde{\beta}_l\in\M'\cup\{0\}\\
    \tilde{\gamma}_1,\ldots,\tilde{\gamma}_l\in\M''\cup\{0\}\\
    \tilde{\beta}_s+\tilde{\gamma}_s=\alpha_s\,\text{for}\,s\in[l]}}
   A_{\tilde{\beta}_1}\cdots A_{\tilde{\beta}_l}B_{\tilde{\gamma}_1}\cdots B_{\tilde{\gamma}_l}.
 \]

 In the left-hand side, each $(\ve{\beta},\ve{\gamma},\sigma,\tau)$
 with $\ve{\beta}=(\beta_1,\ldots,\beta_{\dep\ve{\beta}})\in\I_{\M'}^{\G}$,
 $\ve{\gamma}=(\gamma_1,\ldots,\gamma_{\dep\ve{\gamma}})\in\I_{\M''}$,
 $(\dep\ve{\alpha},\sigma,\tau)\in S_{\dep\ve{\beta},\dep\ve{\gamma}}^*$
 satisfying $\alpha_s=\sum_{t\in\sigma^{-1}(s)}\beta_t+\sum_{u\in\tau^{-1}(s)}\gamma_u$
 for $s\in[l]$ yields the coefficient $A_{\ve{\beta}}B_{\ve{\gamma}}$.
 For each such $(\ve{\beta},\ve{\gamma},\sigma,\tau)$,
 define $\tilde{\beta}_s\in\M'\cup\{0\}$ for $s\in[l]$ by
 \[
  \tilde{\beta}_s=
  \begin{cases}
   \beta_t&\text{if $s\in\Im\sigma$ and $\sigma(t)=s$};\\
   0&\text{if $s\notin\Im\sigma$}.
  \end{cases}
 \]
 Define $\tilde{\gamma}_s\in\M''\cup\{0\}$ for $s\in[l]$ in a similar manner.
 Then we have $\tilde{\beta}_s+\tilde{\gamma}_s=\alpha_s$ for all $s\in[l]$,
 and
 \[
  A_{\ve{\beta}}B_{\ve{\gamma}}
  =A_{\tilde{\beta}_1}\cdots A_{\tilde{\beta}_l}B_{\tilde{\gamma}_1}\cdots B_{\tilde{\gamma}_l}.
 \]

 Conversely, let
 $\tilde{\beta}_1,\ldots,\tilde{\beta}_l\in\M'\cup\{0\}$ and
 $\tilde{\gamma}_1,\ldots,\tilde{\gamma}_l\in\M''\cup\{0\}$ be given so that
 $\tilde{\beta}_s+\tilde{\gamma}_s=\alpha_s$ for all $s\in[l]$.
 Let $\ve{\beta}$ be the sequence $(\tilde{\beta}_1,\ldots,\tilde{\beta}_l)$
 with all zero components removed,
 and define $\sigma\colon[\dep\ve{\beta}]\to[l]$ by setting $\sigma(t)=s$
 if the $t$-th component of $\ve{\beta}$ was originally $\tilde{\beta}_s$.
 Note that the goodness of $\ve{\alpha}$ implies that of $\ve{\beta}$
 thanks to the assumption on $\M''$.
 Defining $\ve{\gamma}$ and $\tau$ in a similar fashion,
 we may see that this gives the desired one-to-one correspondence.
\end{proof}

\subsection{Expansion of the right-hand side}
\begin{defn}
 For a finite subset $A$ of $\Z$,
 denote by $A_{\odd}$ and $A_{\even}$ the sets of
 all odd and even elements of $A$ respectively, and
 by $\lvert A\rvert$ the cardinality of $A$.
\end{defn}

\begin{defn}
 For $a\in\Z_{\ge0}$, set $\M_a=\{(b,b,a)\mid b\in\Z_{\ge0}\}\setminus\{(0,0,0)\}$.
\end{defn}

\begin{lem}\label{lem:RHS1}
 We have
 \[
  \sum_{k}d(x_{(1,1,0)}^k)=\sum_{\ve{\gamma}\in\I_{\M_0}}x_{\ve{\gamma}}.
 \]
\end{lem}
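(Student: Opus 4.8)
The plan is to unwind the definition of $d$ directly on the monomial $x_{(1,1,0)}^k$ and then recognise the resulting sum, after summing over $k$, as the generating series of all sequences in $\I_{\M_0}$.

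First I would apply the transformation $d$ (Definition~\ref{defn:d_in_H^1}, transported to $\A$) to $\ve{\alpha}=(\alpha_1,\ldots,\alpha_k)$ with every component $\alpha_s=(1,1,0)$. For a pair $(l,\sigma)\in S_k^d$, the contracted entry $c_i=\sum_{s\in\sigma^{-1}(i)}\alpha_s$ equals $(b_i,b_i,0)$, where $b_i=\lvert\sigma^{-1}(i)\rvert$. Since $\sigma\colon[k]\to[l]$ is nondecreasing and surjective, each fibre $\sigma^{-1}(i)$ is a nonempty block of consecutive integers, so $b_i\ge1$ and $b_1+\cdots+b_l=k$; conversely every composition $(b_1,\ldots,b_l)$ of $k$ into positive parts arises from a unique such $\sigma$. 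This yields
\[
 d(x_{(1,1,0)}^k)=\sum_{\substack{l\ge0\\b_1+\cdots+b_l=k\\b_1,\ldots,b_l\ge1}}x_{(b_1,b_1,0)}\cdots x_{(b_l,b_l,0)}.
\]

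Next I would sum over all $k\ge0$. Because $k$ is forced to equal $b_1+\cdots+b_l$, summing over $k$ simply deletes that single constraint, leaving a sum over all $l\ge0$ and all positive integers $b_1,\ldots,b_l$. Recalling that $\M_0=\{(b,b,0)\mid b\ge1\}$, a sequence $\ve{\gamma}=(\gamma_1,\ldots,\gamma_l)\in\I_{\M_0}$ is precisely such a choice of $l$ together with $b_1,\ldots,b_l\ge1$ via $\gamma_t=(b_t,b_t,0)$. Hence the double sum collapses to $\sum_{\ve{\gamma}\in\I_{\M_0}}x_{\ve{\gamma}}$, giving the claim.

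There is no genuinely hard step here: the argument is pure bookkeeping. The only point deserving care is to check that the reindexing is an exact bijection, so that each $\ve{\gamma}\in\I_{\M_0}$ is produced exactly once, namely for the unique value $k=b_1+\cdots+b_l=\dep$-weighted count $\wt\ve{\gamma}=(k,k,0)$; this is immediate once the correspondence between nondecreasing surjections and compositions into positive parts is made explicit.
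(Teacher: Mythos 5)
Your proof is correct and follows essentially the same route as the paper: both unwind Definition~\ref{defn:d_in_H^1} on $x_{(1,1,0)}^k$, note that the contracted entries are $(\lvert\sigma^{-1}(i)\rvert,\lvert\sigma^{-1}(i)\rvert,0)\in\M_0$, and exhibit the bijection between triples $(k,l,\sigma)$ and sequences in $\I_{\M_0}$ (you phrase it via compositions of $k$ into positive parts, the paper by writing down the explicit inverse map, which amounts to the same thing).
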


\begin{proof}
 Since
 \[
  \sum_{k}d(x_{(1,1,0)}^k)=\sum_{k}\sum_{(l,\sigma)\in S_k^d}x_{\ve{\gamma}},
 \]
 where $\ve{\gamma}=(\gamma_1,\ldots,\gamma_l)\in\I$ is given by
 \[
  \gamma_i
  =\bigl(\lvert\sigma^{-1}(i)\rvert,\lvert\sigma^{-1}(i)\rvert,0\bigr)\in\M_0,
 \]
 we only need to show that the map $(k,l,\sigma)\mapsto\ve{\gamma}$ is a bijection onto $\I_{\M_0}$.
 Let $\ve{\gamma}\in\I_{\M_0}$ be given.
 Set $l=\dep\ve{\gamma}$ and write $\ve{\gamma}=(\gamma_1,\ldots,\gamma_l)$.
 Put $k=(\wt\ve{\gamma})\cdot e_1$,
 and define $\sigma\colon[k]\to[l]$ by
 \[
  \sigma(s)=\min\{i\in[l]\mid s\le(\gamma_1+\cdots+\gamma_i)\cdot e_1\}.
 \]
 It is easily seen that this gives the inverse, which completes the proof.
\end{proof}

\begin{lem}\label{lem:RHS2}
 We have
 \[
  \sum_{j,m}(-1)^m\bigl(x_{e_3}^m\sha(x_{e_1}x_{e_2})^j\bigr)
  =\sum_{\ve{\beta}\in\I_{\{e_1,e_2,e_3\}}^{\G}}(-1)^{(\wt\ve{\beta})\cdot e_3}x_{\ve{\beta}}.
 \]
\end{lem}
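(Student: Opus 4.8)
The plan is to expand each summand of the left-hand side into a sum of interleavings and then to identify these interleavings, across all $m$ and $j$, with the good sequences indexing the right-hand side. Throughout I use the correspondence $e_1\leftrightarrow+1$, $e_2\leftrightarrow-1$, $e_3\leftrightarrow0$, under which, for $\ve\beta=(\beta_1,\ldots,\beta_l)\in\I_{\{e_1,e_2,e_3\}}$ with $\beta_s=(a_s,b_s,c_s)$, the quantity $\sum_{s=1}^{s'}(a_s-b_s)$ appearing in the definition of goodness is the running total of these signs. Thus $\ve\beta$ is good precisely when every partial sum lies in $\{0,1\}$ and the total sum is $0$.

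First I would fix $m,j\in\Z_{\ge0}$ and analyse the single product $x_{e_3}^m\sha(x_{e_1}x_{e_2})^j$. Here $x_{e_3}^m$ is the word all of whose letters equal $e_3$, while $(x_{e_1}x_{e_2})^j=x_{\ve\delta}$ for the one fixed word $\ve\delta=(e_1,e_2,\ldots,e_1,e_2)$. Because $\sha$ shuffles two words along triples $(l,\sigma,\tau)\in S_{m,2j}^{\sha}$ with $\Im\sigma\cap\Im\tau=\emptyset$ and $l=m+2j$, expanding the product yields a sum of words $x_{\ve\beta}$ in which $\ve\beta$ runs over all sequences obtained by inserting the $m$ copies of $e_3$ into the $2j+1$ gaps of $\ve\delta$, with repetitions allowed. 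The key combinatorial point, which I would prove next, is that these interleavings are exactly the good words with letters in $\{e_1,e_2,e_3\}$ having $m$ occurrences of $e_3$ and $j$ occurrences each of $e_1$ and $e_2$: the partial-sum analysis forces the subword of a good sequence consisting of its $e_1$'s and $e_2$'s to alternate, beginning with $e_1$ and ending with $e_2$, hence to equal $(e_1,e_2)^j$, while the $e_3$'s, which leave the running total unchanged, may sit in any of the gaps; conversely any such interleaving is visibly good.

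Next I would check that each good $\ve\beta$ occurs with coefficient exactly $1$ in $x_{e_3}^m\sha(x_{e_1}x_{e_2})^j$, which is where the main care is needed, since shuffles of words sharing letters can produce higher coefficients. This is avoided here because the two words use disjoint alphabets $\{e_3\}$ and $\{e_1,e_2\}$: for a target word $\ve\beta$ the set of its $e_3$-positions must coincide with $\Im\sigma$, as no letter of $\ve\delta$ equals $e_3$, and this determines the strictly increasing $\sigma$ and $\tau$ uniquely; the remaining requirement that the $\Im\tau$-positions carry $\ve\delta$ in order is exactly the condition that the non-$e_3$ subword of $\ve\beta$ be $(e_1,e_2)^j$, i.e.\ that $\ve\beta$ be good. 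Hence there is precisely one $(l,\sigma,\tau)$ producing each good $\ve\beta$.

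Finally I would assemble the pieces. Since $(\wt\ve\beta)\cdot e_3$ counts the $e_3$'s in $\ve\beta$, the sign $(-1)^m$ attached to the summand indexed by $(m,j)$ equals $(-1)^{(\wt\ve\beta)\cdot e_3}$ for every $\ve\beta$ arising from that summand, and each good word in $\I_{\{e_1,e_2,e_3\}}^{\G}$ arises from exactly one pair $(m,j)$, namely $m=(\wt\ve\beta)\cdot e_3$ and $j=(\wt\ve\beta)\cdot e_1$. Reindexing the double sum over $(m,j)$ and the interleavings as a single sum over $\ve\beta\in\I_{\{e_1,e_2,e_3\}}^{\G}$ then yields the claimed identity. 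The only genuine obstacle is the coefficient-one verification above; the remainder is the bookkeeping of this bijection.
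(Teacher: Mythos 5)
Your proposal is correct and follows essentially the same route as the paper: both establish a bijection between the shuffle data $(j,m,l,\sigma,\tau)$ and the good words $\ve{\beta}\in\I_{\{e_1,e_2,e_3\}}^{\G}$, using that the $e_3$-positions determine $\sigma$ and $\tau$ uniquely and that $(-1)^m=(-1)^{(\wt\ve{\beta})\cdot e_3}$. Your explicit verification that goodness forces the $e_1,e_2$-subword to be $(e_1,e_2)^j$, and the coefficient-one check via disjoint alphabets, are exactly the details the paper compresses into ``it is easily seen that this gives the inverse.''
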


\begin{proof}
 Since
 \begin{align*}
  \sum_{j,m}(-1)^m\bigl(x_{e_3}^m\sha(x_{e_1}x_{e_2})^j\bigr)
  &=\sum_{j,m}\sum_{(l,\sigma,\tau)\in S_{m,2j}^{\sha}}(-1)^mx_{\ve{\beta}},
 \end{align*}
 where $\ve{\beta}=(\beta_1,\ldots,\beta_l)\in\I$ is given by
 \[
  \beta_i=
  \begin{cases}
   e_3&\text{if $i\in\Im\sigma$};\\
   e_1&\text{if $i\in\tau([2j]_{\odd})$};\\
   e_2&\text{if $i\in\tau([2j]_{\even})$},
  \end{cases}
 \]
 we only need to show that the map $(j,m,l,\sigma,\tau)\mapsto\ve{\beta}$
 is a bijection onto $\I_{\{e_1,e_2,e_3\}}^{\G}$, because the coefficients are seen to agree by
 the observation that
 \[
  (-1)^m=(-1)^{\lvert\{i\in[l]\mid \beta_i=e_3\}\rvert}=(-1)^{(\wt\ve{\beta})\cdot e_3}.
 \]
 Let $\ve{\beta}\in\I_{\{e_1,e_2,e_3\}}^{\G}$ be given.
 Set $l=\dep\ve{\beta}$ and write $\ve{\beta}=(\beta_1,\ldots,\beta_l)$.
 Put $j=(\wt\ve{\beta})\cdot e_1$ and $m=(\wt\ve{\beta})\cdot e_3$,
 and define $\sigma\colon[m]\to[l]$ and $\tau\colon[2j]\to[l]$ by
 \begin{align*}
  \sigma(s)&=\text{$s$-th smallest element in $\{i\in[l]\mid\beta_i=e_3\}$},\\
  \tau(t)&=\text{$t$-th smallest element in $\{i\in[l]\mid\beta_i=e_1,e_2\}$}.
 \end{align*}
 It is easily seen that this gives the inverse, which completes the proof.
\end{proof}

\begin{defn}
 Define
 \begin{align*}
  \N&=\{(a,a,0)\mid a\in\Z_{\ge1}\}\cup\{(a+1,a,0)\mid a\in\Z_{\ge0}\}\\
    &\qquad\cup\{(a,a+1,0)\mid a\in\Z_{\ge0}\}\cup\{(a,a,1)\mid a\in\Z_{\ge0}\}\\
    &\subset\M^{\G}.
 \end{align*}
\end{defn}

\begin{lem}\label{lem:RHS_final}
 We have
 \[
  \sum_{j,k,m}(-1)^m\bigl(x_{e_3}^m\sha(x_{e_1}x_{e_2})^j\bigr)*d(x_{(1,1,0)}^k)
  =\sum_{\ve{\alpha}\in\I_{\N}^{\G}}(-1)^{(\wt\ve{\alpha})\cdot e_3}x_{\ve{\alpha}}.
 \]
\end{lem}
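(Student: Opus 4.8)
The plan is to collapse the triple sum into a single product and then invoke Lemma~\ref{lem:good_*}. By $\Q$-bilinearity of $*$, the left-hand side factors as
\[
  \Biggl(\sum_{j,m}(-1)^m\bigl(x_{e_3}^m\sha(x_{e_1}x_{e_2})^j\bigr)\Biggr)*\Biggl(\sum_{k}d(x_{(1,1,0)}^k)\Biggr),
\]
and I would rewrite the two factors by Lemma~\ref{lem:RHS2} and Lemma~\ref{lem:RHS1}, turning it into
\[
  \Biggl(\sum_{\ve{\beta}\in\I_{\{e_1,e_2,e_3\}}^{\G}}(-1)^{(\wt\ve{\beta})\cdot e_3}x_{\ve{\beta}}\Biggr)*\Biggl(\sum_{\ve{\gamma}\in\I_{\M_0}}x_{\ve{\gamma}}\Biggr).
\]

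Next I would apply Lemma~\ref{lem:good_*} with $\M'=\{e_1,e_2,e_3\}$ and $\M''=\M_0$; its hypothesis holds because $\M_0=\{(b,b,0)\mid b\in\Z_{\ge1}\}\subset\{(a,a,b)\mid a,b\in\Z_{\ge0}\}$. Since $(\wt\ve{\beta})\cdot e_3=\sum_s(\beta_s\cdot e_3)$ the sign factorizes, so the natural choice is $A_{e_1}=A_{e_2}=1$, $A_{e_3}=-1$ (whence $A_{\ve{\beta}}=(-1)^{(\wt\ve{\beta})\cdot e_3}$) and $B_{\gamma}=1$ for every $\gamma\in\M_0$ (whence $B_{\ve{\gamma}}=1$). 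As every sequence in $\I_{\M_0}$ is automatically good, $\I_{\M''}=\I_{\M''}^{\G}$, and the expression above is exactly the left-hand side of Lemma~\ref{lem:good_*}. It therefore equals
\[
  \sum_{\ve{\alpha}=(\alpha_1,\ldots,\alpha_l)\in\I^{\G}}\Biggl(\prod_{s=1}^{l}C_{\alpha_s}\Biggr)x_{\ve{\alpha}},\qquad
  C_{\alpha}=\sum_{\substack{\beta\in\M'\cup\{0\}\\\gamma\in\M''\cup\{0\}\\\beta+\gamma=\alpha}}A_{\beta}B_{\gamma}.
\]

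It then remains to evaluate the single-letter coefficient $C_{\alpha}$ for each $\alpha\in\M$. Running through the four admissible values $\beta\in\{0,e_1,e_2,e_3\}$, setting $\gamma=\alpha-\beta$, and imposing $\gamma\in\M_0\cup\{0\}=\{(g,g,0)\mid g\in\Z_{\ge0}\}$ forces $\alpha$ to be, respectively, of the form $(a,a,0)$ with $a\ge1$, $(a+1,a,0)$, $(a,a+1,0)$, or $(a,a,1)$. These four shapes are mutually exclusive, so at most one choice of $\beta$ contributes to a given $\alpha$; the first three yield $A_{\beta}B_{\gamma}=1$ and the last yields $A_{e_3}B_{\gamma}=-1$. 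Comparing with the definition of $\N$, this shows that $C_{\alpha}=(-1)^{\alpha\cdot e_3}$ when $\alpha\in\N$ and $C_{\alpha}=0$ otherwise.

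Feeding this back, the product $\prod_{s=1}^{l}C_{\alpha_s}$ vanishes unless $\alpha_s\in\N$ for every $s$, that is, unless $\ve{\alpha}\in\I_{\N}$, in which case it equals $\prod_{s=1}^{l}(-1)^{\alpha_s\cdot e_3}=(-1)^{(\wt\ve{\alpha})\cdot e_3}$; restricting the outer sum from $\I^{\G}$ to $\I_{\N}\cap\I^{\G}=\I_{\N}^{\G}$ then gives the claimed right-hand side. I expect the only real obstacle to be the bookkeeping in this coefficient computation — namely verifying that the four cases are genuinely disjoint and exhaustive so that no $\alpha$ is double-counted or omitted, and checking the boundary behaviour of $\M_0$ carefully (for instance that $\alpha=(1,0,0)$ is produced only by $\beta=e_1,\gamma=0$, and not by $\beta=0$).
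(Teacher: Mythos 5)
Your proposal is correct and follows essentially the same route as the paper: factor the sum by bilinearity, substitute Lemmas~\ref{lem:RHS1} and~\ref{lem:RHS2}, apply Lemma~\ref{lem:good_*} with $\M'=\{e_1,e_2,e_3\}$, $\M''=\M_0$, $A_{\beta}=(-1)^{\beta\cdot e_3}$, $B_{\gamma}=1$, and evaluate the local coefficient $C_{\alpha}$. Your case analysis by the choice of $\beta$ (rather than the paper's case split on $c$) is an equivalent and equally careful way to verify that $C_{\alpha}=(-1)^{\alpha\cdot e_3}$ for $\alpha\in\N$ and $0$ otherwise.
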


\begin{proof}
 Lemmas~\ref{lem:RHS1} and \ref{lem:RHS2} show that
 \begin{align*}
  &\sum_{j,k,m}(-1)^m\bigl(x_{e_3}^m\sha(x_{e_1}x_{e_2})^j\bigr)*d(x_{(1,1,0)}^k)\\
  &\qquad=
   \Biggl(\sum_{j,m}(-1)^m\bigl(x_{e_3}^m\sha(x_{e_1}x_{e_2})^j\bigr)\Biggr)*
   \Biggl(\sum_{k}d(x_{(1,1,0)}^k)\Biggr)\\
  &\qquad=
   \Biggl(\sum_{\ve{\beta}\in\I_{\{e_1,e_2,e_3\}}^{\G}}(-1)^{(\wt\ve{\beta})\cdot e_3}x_{\ve{\beta}}\Biggr)*
   \Biggl(\sum_{\ve{\gamma}\in\I_{\M_0}}x_{\ve{\gamma}}\Biggr)\\
  &\qquad=
   \sum_{\ve{\beta}\in\I_{\{e_1,e_2,e_3\}}^{\G}}\sum_{\ve{\gamma}\in\I_{\M_0}}
   (-1)^{(\wt\ve{\beta})\cdot e_3}(x_{\ve{\beta}}*x_{\ve{\gamma}}).
 \end{align*}

 Applying Lemma~\ref{lem:good_*} with $\M'=\{e_1,e_2,e_3\}$, $\M''=\M_0$,
 $A_{\beta}=(-1)^{\beta\cdot e_3}$ and $B_{\gamma}=1$ gives
 \begin{align*}
  &\sum_{\ve{\beta}\in\I_{\{e_1,e_2,e_3\}}^{\G}}\sum_{\ve{\gamma}\in\I_{\M_0}}
   (-1)^{(\wt\ve{\beta})\cdot e_3}(x_{\ve{\beta}}*x_{\ve{\gamma}})\\
  &\qquad=\sum_{\ve{\alpha}=(\alpha_1,\ldots,\alpha_l)\in\I^{\G}}
   \Biggl(\prod_{s=1}^{l}\sum_{\substack{\beta\in\{e_1,e_2,e_3,0\}\\\gamma\in\M_0\cup\{0\}\\\beta+\gamma=\alpha_s}}
   (-1)^{\beta\cdot e_3}\Biggr)x_{\ve{\alpha}}.
 \end{align*}

 For $\alpha=(a,b,c)\in\M^{\G}$, we have
 \begin{align*}
  \sum_{\substack{\beta\in\{e_1,e_2,e_3,0\}\\\gamma\in\M_0\cup\{0\}\\\beta+\gamma=\alpha}}
  (-1)^{\beta\cdot e_3}&=
  \begin{cases}
   1&\text{if $c=0$};\\
   -1&\text{if $a=b$ and $c=1$};\\
   0&\text{otherwise}
  \end{cases}\\
  &=
  \begin{cases}
   (-1)^{\alpha\cdot e_3}&\text{if $\alpha\in\N$};\\
   0&\text{if $\alpha\notin\N$}.
  \end{cases}
 \end{align*}

 It follows that
 \[
  \sum_{j,k,m}(-1)^m\bigl(x_{e_3}^m\sha(x_{e_1}x_{e_2})^j\bigr)*d(x_{(1,1,0)}^k)\\
  =\sum_{\ve{\alpha}\in\I_{\N}^{\G}}(-1)^{(\wt\ve{\alpha})\cdot e_3}x_{\ve{\alpha}}.
 \]
\end{proof}

\subsection{Expansion of the left-hand side}
\begin{defn}
 For $\ve{\gamma}=(\gamma_1,\ldots,\gamma_l)\in\I_{\M_1\cup\M_2}$, define
 \[
  \chara\ve{\gamma}=\lvert\{s\in[l]\mid\gamma_s\cdot e_3=1\}\rvert.
 \]
\end{defn}

\begin{lem}\label{lem:LHS1}
 We have
 \[
  \sum_{p,q,u_1,\ldots,u_p,v_1,\ldots,v_q}
  (-2)^p(x_{(u_1,u_1,1)}\cdots x_{(u_p,u_p,1)}\sha x_{(v_1,v_1,2)}\cdots x_{(v_q,v_q,2)})
  =\sum_{\ve{\gamma}\in\I_{\M_1\cup\M_2}}(-2)^{\chara\ve{\gamma}}x_{\ve{\gamma}}.
 \]
\end{lem}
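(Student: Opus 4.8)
The plan is to expand the $\sha$-product on the left-hand side directly from its definition and to match the result term by term with the right-hand side via an explicit bijection, exactly as in the proofs of Lemmas~\ref{lem:RHS1} and \ref{lem:RHS2}. First I would fix $p,q\in\Z_{\ge0}$ together with nonnegative integers $u_1,\ldots,u_p,v_1,\ldots,v_q$, and write out
\[
 x_{(u_1,u_1,1)}\cdots x_{(u_p,u_p,1)}\sha x_{(v_1,v_1,2)}\cdots x_{(v_q,v_q,2)}
 =\sum_{(l,\sigma,\tau)\in S_{p,q}^{\sha}}x_{\ve{\gamma}},
\]
where $l=p+q$ and, since $\Im\sigma\cap\Im\tau=\emptyset$, the component $\gamma_i$ equals $(u_s,u_s,1)$ when $i=\sigma(s)$ and $(v_t,v_t,2)$ when $i=\tau(t)$; no merging occurs, because $\sha$ uses only the disjoint interleavings. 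Thus each term of the left-hand side is a word $\ve{\gamma}=(\gamma_1,\ldots,\gamma_l)\in\I_{\M_1\cup\M_2}$ carrying the coefficient $(-2)^p$.

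It then suffices to show that the assignment $(p,q,u_1,\ldots,u_p,v_1,\ldots,v_q,l,\sigma,\tau)\mapsto\ve{\gamma}$ is a bijection onto $\I_{\M_1\cup\M_2}$. For the inverse, given $\ve{\gamma}=(\gamma_1,\ldots,\gamma_l)\in\I_{\M_1\cup\M_2}$, I would set $p=\chara\ve{\gamma}$ and $q=l-p$, let $\sigma\colon[p]\to[l]$ and $\tau\colon[q]\to[l]$ record, in increasing order, the positions $i$ with $\gamma_i\cdot e_3=1$ and $\gamma_i\cdot e_3=2$ respectively, and read off $u_s=\gamma_{\sigma(s)}\cdot e_1$ and $v_t=\gamma_{\tau(t)}\cdot e_1$. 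Since every component of $\ve{\gamma}$ lies in $\M_1\cup\M_2$, its third coordinate is either $1$ or $2$, so these two index sets partition $[l]$ and yield a well-defined $(l,\sigma,\tau)\in S_{p,q}^{\sha}$; the strict monotonicity of $\sigma$ and $\tau$ leaves no permutation ambiguity, so this is a genuine two-sided inverse.

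Finally, the coefficients match because $p=\chara\ve{\gamma}$ by construction, so $(-2)^p=(-2)^{\chara\ve{\gamma}}$, which is precisely the coefficient on the right-hand side. I do not expect a real obstacle here: the only point requiring care is confirming that the third coordinate ($1$ versus $2$) of each component faithfully records whether it came from the $\M_1$-word or the $\M_2$-word, which is exactly what makes $\sigma$ and $\tau$ recoverable and hence the correspondence bijective. This is the direct analogue of the bookkeeping already carried out in Lemmas~\ref{lem:RHS1} and \ref{lem:RHS2}, now applied to the $\sha$-product rather than to $d$.
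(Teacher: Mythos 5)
Your proof is correct and follows essentially the same route as the paper's: both hinge on the observation that the third coordinate ($1$ versus $2$) of each letter records which factor of the $\sha$-product it came from, so the interleaving is uniquely recoverable and the coefficient $(-2)^p$ becomes $(-2)^{\chara\ve{\gamma}}$. The paper merely organizes the bookkeeping by first factoring the sum into a $\sha$-product of two generating series over $\I_{\M_1}$ and $\I_{\M_2}$, whereas you run the bijection on the full index set in one step; the combinatorial content is identical.
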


\begin{proof}
 We have
 \begin{align*}
  &\sum_{p,q,u_1,\ldots,u_p,v_1,\ldots,v_q}
   (-2)^p(x_{(u_1,u_1,1)}\cdots x_{(u_p,u_p,1)}\sha x_{(v_1,v_1,2)}\cdots x_{(v_q,v_q,2)})\\
  &\qquad\qquad=\Biggl(\sum_{p,u_1,\ldots,u_p}(-2)^px_{(u_1,u_1,1)}\cdots x_{(u_p,u_p,1)}\Biggr)
   \sha\Biggl(\sum_{q,v_1,\ldots,v_q}x_{(v_1,v_1,2)}\cdots x_{(v_q,v_q,2)}\Biggr)\\
  &\qquad\qquad=\Biggl(\sum_{\ve{\gamma}_1\in\I_{\M_1}}(-2)^{\dep\ve{\gamma}_1}x_{\ve{\gamma}_1}\Biggr)
   \sha\Biggl(\sum_{\ve{\gamma}_2\in\I_{\M_2}}x_{\ve{\gamma}_2}\Biggr)\\
  &\qquad\qquad=\sum_{\ve{\gamma}_1\in\I_{\M_1}}\sum_{\ve{\gamma}_2\in\I_{\M_2}}
   (-2)^{\dep\ve{\gamma}_1}(x_{\ve{\gamma}_1}\sha x_{\ve{\gamma}_2})\\
  &\qquad\qquad=\sum_{\ve{\gamma}\in\I_{\M_1\cup\M_2}}
   (-2)^{\chara\ve{\gamma}}x_{\ve{\gamma}},
 \end{align*}
 where the last equality follows from the observation that
 if $\ve{\gamma}_1=(\gamma_{1,1},\ldots,\gamma_{1,\dep\ve{\gamma}_1})\in\I_{\M_1}$,
 $\ve{\gamma}_2=(\gamma_{2,1},\ldots,\gamma_{2,\dep\ve{\gamma}_2})\in\I_{\M_2}$
 and $(l,\sigma_1,\sigma_2)\in S_{\dep\ve{\gamma}_1,\dep\ve{\gamma}_2}^{\sha}$,
 and if we define $\ve{\gamma}=(\gamma_1,\ldots,\gamma_l)\in\I_{\M_1\cup\M_2}$
 by
 \[
  \gamma_s=\sum_{t\in\sigma_1^{-1}(s)}\gamma_{1,t}+\sum_{t\in\sigma_2^{-1}(s)}\gamma_{2,t},
 \]
 then
 \[
  \gamma_s\cdot e_3=
  \begin{cases}
   \gamma_{1,t}\cdot e_3=1&\text{if $s\in\Im\sigma_1$ and $s=\sigma_1(t)$};\\
   \gamma_{2,t}\cdot e_3=2&\text{if $s\in\Im\sigma_2$ and $s=\sigma_2(t)$},
  \end{cases}
 \]
 and so
 \[
  \chara\ve{\gamma}=\lvert\{s\in[l]\mid\gamma_s\cdot e_3=1\}\rvert
  =\lvert\Im\sigma_1\rvert=\dep\ve{\gamma}_1.
 \]
\end{proof}

\begin{defn}
 For $\alpha=(a,b,c)\in\M$, write
 \[
  \langle\alpha\rangle=\binom{a+b+c}{c}=\frac{(a+b+c)!}{(a+b)!c!}\in\Z_{\ge1}.
 \]
 For $\ve{\alpha}=(\alpha_1,\ldots,\alpha_l)\in\I$,
 write $\langle\ve{\alpha}\rangle=\prod_{i=1}^{l}\langle\alpha_i\rangle\in\Z_{\ge1}$,
 where $\langle\emptyset\rangle=1$.
\end{defn}

\begin{lem}\label{lem:LHS2}
 We have
 \[
  \sum_{i,j}d\bigl(x_{e_3}^i\sha(x_{e_1}x_{e_2})^j\bigr)
  =\sum_{\ve{\beta}\in\I^{\G}}\langle\ve{\beta}\rangle x_{\ve{\beta}}.
 \]
\end{lem}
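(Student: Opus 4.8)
The plan is to collapse the outer double sum into a single application of $d$ and then read off the coefficient of each $x_{\ve\beta}$ by a lattice-path count. First I would exploit the proof of Lemma~\ref{lem:RHS2}, which furnishes a bijection $(j,m,l,\sigma,\tau)\mapsto\ve\beta$ from the terms of $\sum_{j,m}x_{e_3}^m\sha(x_{e_1}x_{e_2})^j$ onto $\I_{\{e_1,e_2,e_3\}}^{\G}$, each occurring exactly once; since that bijection does not involve the signs, discarding them yields $\sum_{i,j}x_{e_3}^i\sha(x_{e_1}x_{e_2})^j=\sum_{\ve\gamma\in\I_{\{e_1,e_2,e_3\}}^{\G}}x_{\ve\gamma}$. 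By the $\Q$-linearity of $d$ it then suffices to evaluate $\sum_{\ve\gamma\in\I_{\{e_1,e_2,e_3\}}^{\G}}d(x_{\ve\gamma})$ and to show that the coefficient of $x_{\ve\beta}$ there equals $\langle\ve\beta\rangle$ for $\ve\beta\in\I^{\G}$ and $0$ otherwise.

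Because $d$ preserves $\A^{\G}$ by Proposition~\ref{prop:good_dwsha}, only good $\ve\beta$ can occur, so I would fix $\ve\beta=(\beta_1,\ldots,\beta_l)\in\I^{\G}$ with $\beta_i=(a_i,b_i,c_i)$ and count the pairs consisting of a good word $\ve\gamma\in\I_{\{e_1,e_2,e_3\}}^{\G}$ together with a factorization into $l$ consecutive nonempty blocks $\ve\gamma=w_1\cdots w_l$ whose letters sum to $\beta_1,\ldots,\beta_l$; these are precisely the data the definition of $d$ records. Such a block $w_i$ must consist of $a_i$ copies of $e_1$, $b_i$ copies of $e_2$ and $c_i$ copies of $e_3$ in some order, so the count amounts to determining how many orderings inside each block are compatible with the goodness of the whole word.

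The crucial step is this block-level count. Reading $e_1$ as an up-step, $e_2$ as a down-step and $e_3$ as a flat step, goodness says exactly that the running $(a-b)$-sum stays in $\{0,1\}$; since $\ve\beta$ is good, this sum equals $h_{i-1}:=\sum_{i'<i}(a_{i'}-b_{i'})\in\{0,1\}$ at the start of $w_i$ and $h_i\in\{0,1\}$ at its end. I would then observe that a path confined to $\{0,1\}$ is forced to alternate, since at height $0$ only an up-step is admissible and at height $1$ only a down-step; hence the subword of $e_1$'s and $e_2$'s inside $w_i$ is uniquely determined by $h_{i-1}$, while the $c_i$ flat steps $e_3$ may be inserted into any of the $a_i+b_i+c_i$ positions. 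This gives exactly $\binom{a_i+b_i+c_i}{c_i}=\langle\beta_i\rangle$ admissible blocks, and conversely every such choice concatenates to a good $\ve\gamma$ because the running sum never leaves $\{0,1\}$ and returns to $h_l=0$. Multiplying over $i$ produces the coefficient $\prod_{i=1}^l\langle\beta_i\rangle=\langle\ve\beta\rangle$.

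I expect the main obstacle to be precisely this block count: one must argue carefully that goodness forces the relative order of the up- and down-steps, so that the naive multinomial $\binom{a_i+b_i+c_i}{a_i,b_i,c_i}$ collapses to $\binom{a_i+b_i+c_i}{c_i}$, and that the boundary heights $h_i$ read off from $\ve\beta$ are exactly the values at which the forced alternating pieces glue together consistently. The surrounding reductions—rewriting the left-hand side via Lemma~\ref{lem:RHS2} and pushing $d$ through the sum by linearity—are routine.
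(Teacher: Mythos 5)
Your proposal is correct and follows essentially the same route as the paper: reduce to $\sum_{\ve{\gamma}\in\I_{\{e_1,e_2,e_3\}}^{\G}}d(x_{\ve{\gamma}})$ via the bijection underlying Lemma~\ref{lem:RHS2}, note that the block sizes and hence $(l,\sigma)$ are forced by $\ve{\beta}$, and observe that goodness pins down the interleaving of $e_1$ and $e_2$ within each block so that only the $\binom{a_i+b_i+c_i}{c_i}$ placements of the $e_3$'s remain. Your lattice-path justification of the forced alternation is exactly the reasoning the paper leaves implicit.
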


\begin{proof}
 Since
 \[
  \sum_{i,j}(x_{e_3}^i\sha(x_{e_1}x_{e_2})^j\bigr)
  =\sum_{\ve{\alpha}\in\I_{\{e_1,e_2,e_3\}}^{\G}}x_{\ve{\alpha}}
 \]
 by a reasoning similar to the one used in the proof of Lemma~\ref{lem:RHS2}, we have
 \[
  \sum_{i,j}d\bigl(x_{e_3}^i\sha(x_{e_1}x_{e_2})^j\bigr)
  =\sum_{\ve{\alpha}\in\I_{\{e_1,e_2,e_3\}}^{\G}}d(x_{\ve{\alpha}})
  =\sum_{\ve{\alpha}\in\I_{\{e_1,e_2,e_3\}}^{\G}}\sum_{(l,\sigma)\in S_{\dep\ve{\alpha}}^d}x_{\ve{\beta}},
 \]
 where $\ve{\beta}=(\beta_1,\ldots,\beta_l)\in\I^{\G}$ is given by
 \[
  \beta_s=\sum_{t\in\sigma^{-1}(s)}\alpha_t
 \]
 if we write $\ve{\alpha}=(\alpha_1,\ldots,\alpha_{\dep\ve{\alpha}})$.
 It suffices to prove that for each $\ve{\beta}\in\I^{\G}$,
 there are $\langle\ve{\beta}\rangle$ triples $(\ve{\alpha},l,\sigma)$
 that yield $\ve{\beta}$.

 Let $\ve{\beta}\in\I^{\G}$ be given.
 Set $l=\dep\ve{\beta}$ and write $\ve{\beta}=(\beta_1,\ldots,\beta_l)$ and
 $\beta_s=(a_s,b_s,c_s)$ for $s\in[l]$.
 The depth of $\ve{\alpha}$ is uniquely determined because
 \[
  \dep\ve{\alpha}=\wt\ve{\alpha}\cdot(1,1,1)
  =\wt\ve{\beta}\cdot(1,1,1).
 \]
 The map $\sigma$ is also uniquely determined because
 for every $s\in[l]$, we have
 \[
  \lvert\sigma^{-1}(s)\rvert
  =\sum_{t\in\sigma^{-1}(s)}\alpha_t\cdot(1,1,1)
  =\beta_s\cdot(1,1,1)
  =a_s+b_s+c_s.
 \]
 Therefore choosing an appropriate $\ve{\alpha}$
 is equivalent to choosing, for each $s\in[l]$, from $a_s+b_s+c_s$ components
 $c_s$ components to be occupied by $e_3$,
 because then the goodness of $\ve{\alpha}$ determines how to allocate
 the remaining $a_s+b_s$ components to $e_1$ and $e_2$.
 It follows that the number of suitable $\ve{\alpha}$
 is
 \[
  \prod_{s=1}^{l}\binom{a_s+b_s+c_s}{c_s}=\langle\ve{\beta}\rangle.
 \]
\end{proof}

\begin{lem}\label{lem:LHS_final}
 We have
 \begin{align*}
  &\sum_{i,j,p,q,u_1,\ldots,u_p,v_1,\ldots,v_q}
   (-2)^pd\bigl(x_{e_3}^i\sha(x_{e_1}x_{e_2})^j\bigr)*
   (x_{(u_1,u_1,1)}\cdots x_{(u_p,u_p,1)}\sha x_{(v_1,v_1,2)}\cdots x_{(v_q,v_q,2)})\\
  &\qquad\qquad=\sum_{\ve{\alpha}\in\I_{\N}^{\G}}(-1)^{(\wt\ve{\alpha})\cdot e_3}x_{\ve{\alpha}}.
 \end{align*}
\end{lem}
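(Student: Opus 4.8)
The plan is to follow the template already used for the right-hand side in Lemma~\ref{lem:RHS_final}. First I would note that the left-hand side factors as a $*$-product, since $d\bigl(x_{e_3}^i\sha(x_{e_1}x_{e_2})^j\bigr)$ depends only on $(i,j)$ while the remaining factor depends only on $(p,q,u_1,\ldots,u_p,v_1,\ldots,v_q)$. Applying Lemma~\ref{lem:LHS2} to the first factor and Lemma~\ref{lem:LHS1} to the second rewrites the whole sum as
\[
\Biggl(\sum_{\ve{\beta}\in\I^{\G}}\langle\ve{\beta}\rangle x_{\ve{\beta}}\Biggr)*\Biggl(\sum_{\ve{\gamma}\in\I_{\M_1\cup\M_2}}(-2)^{\chara\ve{\gamma}}x_{\ve{\gamma}}\Biggr).
\]
I would then invoke Lemma~\ref{lem:good_*} with $\M'=\M^{\G}$, $\M''=\M_1\cup\M_2$, $A_\beta=\langle\beta\rangle$, and $B_\gamma=-2$ for $\gamma\in\M_1$, $B_\gamma=1$ for $\gamma\in\M_2$. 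The hypotheses hold because $\M''\subset\{(a,a,b)\mid a,b\in\Z_{\ge0}\}$ and because every good index has great components, so $\I_{\M^{\G}}^{\G}=\I^{\G}$; one also checks $\prod_t B_{\gamma_t}=(-2)^{\chara\ve{\gamma}}$. This reduces everything to computing, for each great $\alpha=(a,b,c)$, the inner sum
\[
S(\alpha)=\sum_{\substack{\beta\in\M^{\G}\cup\{0\}\\\gamma\in(\M_1\cup\M_2)\cup\{0\}\\\beta+\gamma=\alpha}}\langle\beta\rangle B_\gamma,
\]
and showing $S(\alpha)=(-1)^{\alpha\cdot e_3}$ when $\alpha\in\N$ and $S(\alpha)=0$ otherwise; the conclusion then follows exactly as in Lemma~\ref{lem:RHS_final}.

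For the pointwise computation I would split on $c=\alpha\cdot e_3$. When $c=0$ only $\gamma=0$ contributes, so $S(\alpha)=\langle\alpha\rangle=\binom{a+b}{0}=1=(-1)^c$, and $\alpha\in\N$. When $c=1$ the admissible $\gamma$ are $0$ and $(g,g,1)$ with $0\le g\le\min\{a,b\}$; since each $\langle(a-g,b-g,0)\rangle=1$, a direct count gives $S(\alpha)=(a+b+1)-2(\min\{a,b\}+1)=\lvert a-b\rvert-1$, which is $-1=(-1)^c$ when $a=b$ (so $\alpha\in\N$) and $0$ when $\lvert a-b\rvert=1$ (so $\alpha\notin\N$).

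The main obstacle is the case $c\ge2$, where all three types of $\gamma$ occur and $S(\alpha)$ becomes a genuine alternating binomial sum. Writing $N=a+b+c$ and $M=\min\{a,b\}$ and absorbing $\langle\beta\rangle$ for $\beta=(a-g,b-g,c-t)$ into $\binom{N-2g-t}{c-t}$ (with $\binom{0}{0}=1$ covering $\beta=0$), I would obtain
\[
S(\alpha)=\binom{N}{c}-2\sum_{g=0}^{M}\binom{N-2g-1}{c-1}+\sum_{g=0}^{M}\binom{N-2g-2}{c-2}.
\]
The key, which is the crux of the whole argument, is the Pascal-rule identity
\[
-2\binom{n-1}{c-1}+\binom{n-2}{c-2}=\binom{n-2}{c}-\binom{n}{c},
\]
applied with $n=N-2g$. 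Summing it over $g$ telescopes the two sums into $\binom{N-2M-2}{c}-\binom{N}{c}$, whence $S(\alpha)=\binom{N-2M-2}{c}$. Since $N-2M-2=\lvert a-b\rvert+c-2\in\{c-2,c-1\}$ is strictly less than $c$, this vanishes for every $c\ge2$, matching $\alpha\notin\N$.

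Collecting the three cases gives $S(\alpha)=(-1)^{\alpha\cdot e_3}$ if $\alpha\in\N$ and $0$ otherwise. Consequently, in the expansion over $\ve{\alpha}=(\alpha_1,\ldots,\alpha_l)\in\I^{\G}$ produced by Lemma~\ref{lem:good_*}, the coefficient $\prod_{s=1}^{l}S(\alpha_s)$ vanishes unless every $\alpha_s$ lies in $\N$, in which case it equals $\prod_{s}(-1)^{\alpha_s\cdot e_3}=(-1)^{(\wt\ve{\alpha})\cdot e_3}$. The sum over $\I^{\G}$ therefore collapses to a sum over $\I_{\N}^{\G}$ with the stated sign, which is precisely the right-hand side of the lemma. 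I expect the binomial telescoping of the $c\ge2$ case to be the only delicate point; the rest is bookkeeping with the three earlier lemmas.
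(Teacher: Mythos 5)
Your proposal is correct and follows essentially the same route as the paper: factor the sum via Lemmas~\ref{lem:LHS1} and \ref{lem:LHS2}, apply Lemma~\ref{lem:good_*}, and evaluate the local coefficient $C_\alpha$ case by case on $c$, with the $c\ge2$ case killed by the same Pascal-rule cancellation (you telescope the two correction sums into $\binom{N-2M-2}{c}-\binom{N}{c}$ where the paper rewrites $\binom{s}{c}$ as a telescoping sum and cancels termwise, but both hinge on the identical identity and on $\binom{N-2M-2}{c}=0$). The choice $\M'=\M^{\G}$ instead of the paper's $\M'=\M$ is an immaterial variation since $\I_{\M^{\G}}^{\G}=\I^{\G}$.
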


\begin{proof}
 Lemmas~\ref{lem:LHS1} and \ref{lem:LHS2} show that
 \begin{align*}
  &\sum_{i,j,p,q,u_1,\ldots,u_p,v_1,\ldots,v_q}
   (-2)^pd\bigl(x_{e_3}^i\sha(x_{e_1}x_{e_2})^j\bigr)*
   (x_{(u_1,u_1,1)}\cdots x_{(u_p,u_p,1)}\sha x_{(v_1,v_1,2)}\cdots x_{(v_q,v_q,2)})\\
  &\qquad\qquad=
   \Biggl(\sum_{i,j}d\bigl(x_{e_3}^i\sha(x_{e_1}x_{e_2})^j\bigr)\Biggr)\\
  &\qquad\qquad\qquad\qquad*\Biggl(\sum_{p,q,u_1,\ldots,u_p,v_1,\ldots,v_q}
    (-2)^p(x_{(u_1,u_1,1)}\cdots x_{(u_p,u_p,1)}\sha x_{(v_1,v_1,2)}\cdots x_{(v_q,v_q,2)})
    \Biggr)\\
  &\qquad\qquad=
   \Biggl(\sum_{\ve{\beta}\in\I^{\G}}\langle\ve{\beta}\rangle x_{\ve{\beta}}\Biggr)
   *\Biggl(\sum_{\ve{\gamma}\in\I_{\M_1\cup\M_2}}(-2)^{\chara\ve{\gamma}}x_{\ve{\gamma}}\Biggr)\\
  &\qquad\qquad=\sum_{\ve{\beta}\in\I^{\G}}\sum_{\ve{\gamma}\in\I_{\M_1\cup\M_2}}
   \langle\ve{\beta}\rangle(-2)^{\chara\ve{\gamma}}(x_{\ve{\beta}}*x_{\ve{\gamma}}).
 \end{align*}

 Applying Lemma~\ref{lem:good_*} with $\M'=\M$, $\M''=\M_1\cup\M_2$,
 $A_{\beta}=\langle\beta\rangle$ and $B_{\gamma}=(-2)^{\chi_{\{1\}}(\gamma\cdot e_3)}$ gives
 \begin{align*}
  &\sum_{\ve{\beta}\in\I^{\G}}\sum_{\ve{\gamma}\in\I_{\M_1\cup\M_2}}
  \langle\ve{\beta}\rangle(-2)^{\chara\ve{\gamma}}(x_{\ve{\beta}}*x_{\ve{\gamma}})\\
  &\qquad=
   \sum_{\ve{\alpha}=(\alpha_1,\ldots,\alpha_l)\in\I^{\G}}
   \Biggl(\prod_{s=1}^{l}
    \sum_{\substack{\beta\in\M\cup\{0\}\\\gamma\in\M_1\cup\M_2\cup\{0\}\\\beta+\gamma=\alpha_s}}
    \langle\beta\rangle(-2)^{\chi_{\{1\}}(\gamma\cdot e_3)}\Biggr)x_{\ve{\alpha}},
 \end{align*}
 where $\chi_{\{1\}}$ denotes the characteristic function of $\{1\}$.

 For $\alpha=(a,b,c)\in\M^{\G}$, put
 \[
  C_{\alpha}
  =\sum_{\substack{\beta\in\M\cup\{0\}\\\gamma\in\M_1\cup\M_2\cup\{0\}\\\beta+\gamma=\alpha}}
  \langle\beta\rangle(-2)^{\chi_{\{1\}}(\gamma\cdot e_3)}
 \]
 for simplicity.
 If $c=0$, then
 \[
  C_{\alpha}=\langle\alpha\rangle=\binom{a+b}{0}=1.
 \]
 If $c=1$, then
 \begin{align*}
  C_{\alpha}
  &=\langle\alpha\rangle-2\sum_{k=0}^{\min\{a,b\}}\langle\alpha-(k,k,1)\rangle
   =\binom{a+b+1}{1}-2\sum_{k=0}^{\min\{a,b\}}\binom{a+b-2k}{0}\\
  &=(a+b+1)-2(\min\{a,b\}+1)
   =\begin{cases}
     0&\text{if $\lvert a-b\rvert=1$};\\
     -1&\text{if $a=b$}.
    \end{cases}
 \end{align*}
 If $c\ge2$, then, writing $s=a+b+c$, we have
 \begin{align*}
  C_{\alpha}
  &=\langle\alpha\rangle
    -2\sum_{k=0}^{\min\{a,b\}}\langle\alpha-(k,k,1)\rangle
    +\sum_{k=0}^{\min\{a,b\}}\langle\alpha-(k,k,2)\rangle\\
  &=\binom{s}{c}-2\sum_{k=0}^{\min\{a,b\}}\binom{s-2k-1}{c-1}+\sum_{k=0}^{\min\{a,b\}}\binom{s-2k-2}{c-2}\\
  &=\sum_{k=0}^{\min\{a,b\}}\Biggl(\binom{s-2k}{c}-\binom{s-2k-2}{c}\Biggr)
    -2\sum_{k=0}^{\min\{a,b\}}\binom{s-2k-1}{c-1}
    +\sum_{k=0}^{\min\{a,b\}}\binom{s-2k-2}{c-2}
 \end{align*}
 because $s-2\min\{a,b\}-2<c$; therefore $C_{\alpha}=0$ because
 \begin{align*}
  &\binom{s-2k}{c}-\binom{s-2k-2}{c}-2\binom{s-2k-1}{c-1}+\binom{s-2k-2}{c-2}\\
  &\qquad=\Biggl(\binom{s-2k-1}{c}+\binom{s-2k-1}{c-1}\Biggr)
   -\Biggl(\binom{s-2k-1}{c}-\binom{s-2k-2}{c-1}\Biggr)\\
  &\qquad\qquad-2\binom{s-2k-1}{c-1}+\Biggl(\binom{s-2k-1}{c-1}-\binom{s-2k-2}{c-1}\Biggr)\\
  &\qquad=0.
 \end{align*}
 In summary, we have
 \[
  C_{\alpha}=
  \begin{cases}
   (-1)^{\alpha\cdot e_3}&\text{if $\alpha\in\N$};\\
   0&\text{if $\alpha\notin\N$}.
  \end{cases}
 \]

 It follows that
 \begin{align*}
  &\sum_{i,j,p,q,u_1,\ldots,u_p,v_1,\ldots,v_q}
   (-2)^pd\bigl(x_{e_3}^i\sha(x_{e_1}x_{e_2})^j\bigr)*
   (x_{(u_1,u_1,1)}\cdots x_{(u_p,u_p,1)}\sha x_{(v_1,v_1,2)}\cdots x_{(v_q,v_q,2)})\\
  &\qquad\qquad=\sum_{\ve{\alpha}\in\I_{\N}^{\G}}(-1)^{(\wt\ve{\alpha})\cdot e_3}x_{\ve{\alpha}}.
 \end{align*}
\end{proof}

Lemmas~\ref{lem:RHS_final} and \ref{lem:LHS_final} imply Theorem~\ref{thm:in_formal},
thereby establishing our main theorem.

\end{document}